\newtheorem{thm}{Theorem}
\newtheorem{prop}{Proposition}
\newtheorem{cor}{Corollary}
\theoremstyle{definition}
\newtheorem{remark}{Remark}
\newcommand{\N}{\mathbb{N}}
\newcommand{\R}{\mathbb{R}}
\newcommand{\E}{\mathbb{E}}
\renewcommand{\P}{\mathbb{P}}
\newcommand{\Lcal}{\mathcal{L}}
\author{Angeliki Koutsimpela\footnote{ angeliki.koutsimpela@uni-a.de}}
\affil{University of Augsburg}
\date{}                                  
\newcommand{\veps}{\varepsilon} 
\newcommand{\W}{W}
\renewcommand{\P}{\mathbb{P}}
\begin{document}

\title{Mean-field limits in interacting particle systems with symmetric superlinear rates} 

\maketitle

\begin{abstract}We derive the exchange-driven growth (EDG) equations as the mean-field limit of  interacting particle systems on the complete graph. Extending previous work, we consider symmetric exchange kernels $c(k, l) = c(l, k)$ satisfying super-linear bounds of the form $c(k, l) \leq C(k^{\mu}l^{\nu} + k^{\nu}l^{\mu})$, with $0 \leq \mu, \nu \leq 2$ and $\mu + \nu \leq 3$. Under these conditions the EDG equations are known to have global solutions. We establish a law of large numbers for the empirical measures, showing that the solutions of the EDG equation describe the limiting distribution of cluster sizes in the particle system. Furthermore, we analyse the dynamics of tagged particles and prove convergence to a time-inhomogeneous Markov process governed by a nonlinear master equation, derived via a law of large numbers for size-biased empirical processes.
\end{abstract}

\noindent
\textbf{Keywords.} interacting particle system ; tagged particle ; size-biased empirical process ; mean-field scaling limit

\section{Introduction}

In applied sciences, aggregation phenomena are often modelled by particle systems driven by exchange mechanisms between clusters. 
In the exchange-driven growth (EDG) model, clusters are composed of an integer number of monomers and interact by exchanging a single monomer at a time \cite{ben2003exchange}. Denoting by \(\{k\}\) a cluster of integer size \(k\), the exchange mechanism in EDG can be formally represented as:
\begin{equation}\label{chem}
\{k\} + \{l-1\} \xrightleftharpoons[c(l,k-1)]{c(k,l-1)} \{k-1\} + \{l\}\ ,\quad k,l\geq 1\ ,
\end{equation}
where the kernel \(c(k,l-1)\geq 0\) denotes the rate at which a monomer is transferred from a cluster of size  $k$ to one of size $l-1$.
This exchange mechanism is fundamentally different from those in coagulation models, such as the well-studied Smoluchowski equation \cite{Smoluchowski1927}. The EDG model applies to a broad range of phenomena across both the natural and social sciences, including problems in kinetic theory, ferromagnetism \cite{Krapivsky2010}, cloud and galaxy formation \cite{Hidy1972}, as well as wealth exchange \cite{Ispolatov1998} and migration \cite{Leyvraz2002}.
Mathematically, if \(f_k(t)\) denotes the fraction of clusters containing $k\in \N$ particles at time $t\in [0,\infty )$, the exchange mechanism in \eqref{chem} leads to the following mean-field rate equation
\begin{align}\label{EDG}
\frac{df_{k}(t)}{dt}
&=\sum_{l\geq 1}c(k+1,l)f_l(t)f_{k+1}(t)+\sum_{l\geq 1} c(l,k-1) f_l(t)f_{k-1}(t) \nonumber \\
&\quad-\left(\sum_{l\geq 0} c(k,l)f_l(t)+ \sum_{l\geq 0} c(l,k)f_l(t)\right)f_{k}(t)\quad \text{for all }k\geq 0,
\end{align}
where we set $f_{-1} (t)\equiv 0$ and naturally $c(0,l)= 0$ for all $l\geq 0$. 
Questions regarding the well-posedness of equation \eqref{EDG} have attracted considerable attention, with various assumptions imposed on the kernels  $c(k,l)$ (see e.g. \cite{Eichenberg04032021,  FLeyvraz_1981, globalEDG, WWhite1980}). In the physics literature, equations of the form \eqref{EDG} are typically viewed as scaling limits of stochastic interacting particle systems in the limit of diverging system size. This connection was rigorously established in \cite{grosskinsky2019derivation}, where a law of large numbers for the empirical measures was proven for misanthrope-type processes \cite{cocozza1985processus} on the complete graph, under the assumption that the jump rates are bounded by a bilinear function, i.e., 
$c(k,l)\leq Ck(1+l)$, for some constant $C>0.$ In that framework, the solution $f_k(t)$ to \eqref{EDG} asymptotically describes the fraction of sites occupied by $k$ particles, and a propagation of chaos result was established. That is, the dynamics on any finite number of sites in the particle system become asymptotically independent birth-death processes with master equation given by \eqref{EDG}. In \cite{schlichting2024variational} this mean-field scaling limit has been studied in the context of variational convergence for gradient flow structures. Another similar result has been derived recently for a system where more than one monomer can be exchanged and masses are rescaled, leading to a generalized EDG system with continuous mass index \cite{lam2025convergence}. In both works the above bilinear bound on the jump rates is assumed.

In this work, we extend the results of \cite{grosskinsky2019derivation} by focusing on misanthrope-type processes with symmetric kernels 
\(c(k,l)=c(l,k)\) (exchange is directionally neutral), but allowing for super-linear growth bounds of the form $$c(k,l)\leq C(k^{\mu}l^{\nu}+k^{\nu}l^{\mu})$$
with $0\leq \mu,\; \nu\leq 2$ and $\mu+\nu\leq 3$. 
In particular, we prove a law of large numbers for the empirical measures of the underlying particle system. Under the symmetry assumption on the rates, equation \eqref{EDG} now corresponds to the master equation of a symmetric random walk absorbed at zero (see equation \eqref{mastereq}). 
Moreover, we investigate the behaviour of tagged particles, extending the results of \cite{SGAKtagged} within the assumptions of our model. Specifically, we show that in the mean-field limit, the occupation number at a tagged particle's site converges to a time-inhomogeneous Markov process, whose dynamics are governed by a nonlinear master equation derived from a law of large numbers for size-biased empirical processes.

It is worth noting that global well-posedness of equation \eqref{EDG} under symmetric kernels satisfying the above bounds has been independently studied and established in \cite{esenturk2018mathematical, globalEDG}. However, for kernels of higher order, new challenges emerge: solutions can blow up in finite time, and in some cases even exhibit instantaneous gelation (i.e., blow-up at time zero) \cite{ Eichenberg04032021, esenturk2018mathematical}. A rigorous investigation into the connection between the loss of global well-posedness for equation \eqref{EDG} and the microscopic particle system along the heuristics in \cite{ben2003exchange} lies beyond the scope of the present study and will be addressed in future work.

\section{Notation and a Law of Large Numbers \label{sec:notation}}

\subsection{Mathematical setting}

We consider stochastic particle systems $({\eta}(t):t>0)$ on finite lattices $\Lambda$ of size $|\Lambda|=L\geq 2$. Configurations are denoted by ${\eta} =(\eta_x :x\in\Lambda)$ where $\eta_x \in\mathbb{\N}_0$ is the number of particles on site $x$. We consider systems with a fixed number of particles $N=\sum_{x\in\Lambda} \eta_x$ and the state space of all such configurations is denoted by $E_{L,N}\subset \N_0^\Lambda$. 
The dynamics of the process is defined by the infinitesimal generator 
\begin{equation}
	\label{eq:GenMis}
	(\mathcal{L}g)({\eta})=\sum_{x,y\in\Lambda}q(x,y)c(\eta_{x},\eta_{y})(g({\eta}^{x\rightarrow y})-g({\eta}))\ ,\quad g\in C_b (E_{L,N})\ .
\end{equation}
Here, the usual notation ${\eta}^{x\rightarrow y}$ indicates a configuration where one particle has moved from site $x$ to $y$, i.e. $\eta_{z}^{x\rightarrow y}=\eta_{z}-\delta_{z,x} +\delta_{z,y}$, and $\delta$ is the Kronecker delta.  To ensure that the process is non-degenerate, the jump rates satisfy
\begin{equation}\label{cassum}
	\left\{ \begin{array}{cl}
		c(0,l)=0\;&\mbox{for all }l\geq 0\\
		c(k,l)>0\;&\mbox{for all }k>0\;\mbox{and }l> 0.
	\end{array} \right.
\end{equation}
Our main further assumptions on the rates are that they are symmetric
\begin{equation}\label{symmetry}
    c(k,l)=c(l,k)\quad\mbox{for all }k,l\geq 0\ ,
\end{equation}
and satisfy a superlinear bound
\begin{equation}\label{kernel}
    c(k,l)\leq C(k^{\mu}l^{\nu}+k^{\nu}l^{\mu})\quad\mbox{for all }k,l\geq 0\mbox{ and some }C>0\ ,
\end{equation}
where $0\leq\mu,\nu \leq 2$ and $\mu+\nu\leq 3$. Since $E_{L,N}$ is finite, the generator \eqref{eq:GenMis} is defined for all bounded, continuous test functions $g\in C_{b}(E_{L,N})$. Symmetry \eqref{symmetry} and \eqref{cassum} imply that $c(k,0)=0$ for all $k\geq 0$, so once a lattice site becomes empty it stays empty. 
We focus on complete graph dynamics, i.e. $q(x,y)=1/(L-1)$ for all $x \neq y$, and
\begin{equation}\label{eq:consmass}
	\sum_{x\in\Lambda} \eta_x (t)\equiv N\quad\mbox{is the only conserved quantity}\ .
\end{equation}
The system eventually converges to one of $L$ absorbing states, where all particles are contained in a single cluster.

In the following, we denote by $\mathbb{P}^L$ and $\mathbb{E}^L$ the law and expectation on the path space $\Omega=D_{[0,\infty)}(E_{L,N})$ of the process $\big( \eta(t): \;t\geq 0\big)$. 
As usual, we use the Borel $\sigma$-algebra for the discrete product topology on $E_{L,N}$, and the smallest $\sigma$-algebra on $\Omega$ such that $\omega\mapsto \eta_t(\omega)$ is measurable for all $t\geq 0$. 
We consider the empirical processes $t\mapsto F_k^L ({\eta}(t))$ with
\begin{equation}
	F_{k}^L ({\eta}):=\frac{1}{L}\sum_{x\in\Lambda}\delta_{\eta_{x},k} \in [0,1]\ ,\quad k\geq 0\ , \label{fk}
\end{equation}
counting the fraction of lattice sites for each occupation number $k\geq 0$. 
For our main result we will consider the thermodynamic limit with density $\rho$, i.e.
\begin{equation}\label{thermo}
	L\to\infty ,\ N=N_L \to\infty \quad\mbox{such that}\quad N/L\to\rho\geq 0\ .
\end{equation} 
Under condition \eqref{thermo}, the sequence $N/L$ is bounded from above by a constant and for simplicity and without loss of generality, we assume that
\begin{equation}\label{thermob}
	 N/L\leq \rho\quad\mbox{for all }L\geq 2\ .
\end{equation}

For the sequence (in $L$) of initial conditions $(\eta(0) ,X(0))$ we first require the minimal condition that there exists a fixed probability distribution $f (0)$ on $\N_0$ with finite moments
\begin{equation}\label{initialcon0}
	m_1(0) := \sum_{k} kf_k(0) =\rho < \infty  \quad\text{and}\quad   m_4(0) := \sum_{k\geq 1}k^4 f_k(0) < \infty,
\end{equation} 
such that we have a weak law of large numbers
\begin{equation}\label{initialcon0b}
	F_k^L (\eta(0)) \stackrel{d}{\longrightarrow} f_k (0) \quad\text{as }L\to\infty ,\ \text{for all } k \geq 0.
\end{equation}
As a further regularity assumption on the initial conditions we require a uniform bound for the third moment, i.e.\ for some fixed $\alpha >0$
\begin{align}
	\label{initialcon0c}
	 \E^L \Big[\frac{1}{L}\sum_{x \in \Lambda}\eta_x^3(0)\Big]\leq \alpha \quad\mbox{for all }L\geq 2\ .
\end{align}
Note that (\ref{thermob}) and conservation of mass \eqref{eq:consmass} imply for the first moment that
\begin{equation}
	\label{alpha}
	\frac{1}{L}\sum_{x\in\Lambda} \eta_x (t)=\sum_{k\geq 0}k F_k^L (\eta (t))=\frac{N}{L}\leq \rho \ , \quad\P^L -a.s.\ \mbox{for all }t\geq 0\mbox{ and }L\geq 2\ .
\end{equation}

\subsection{A law of large numbers for empirical processes}


\begin{thm}
	\label{thmfactorization}
	Consider a process with generator \eqref{eq:GenMis} on the complete graph with a symmetric kernel $c(k,l)$ satisfying the bound \eqref{kernel} and initial conditions satisfying \eqref{initialcon0}, \eqref{initialcon0b} and \eqref{initialcon0c}.  
	Then we have in the thermodynamic limit \eqref{thermo} for any $\rho >0$ and any bounded function $h:\N_0\to\R$,
	\begin{equation}
		\Big(\sum_{k\geq 0} F_k^L (\eta(t) )\, h(k) :t \geq0\Big)\to \Big(\sum_{k\geq 0} f_k (t)\, h(k) :t\geq0\Big)\quad\mbox{weakly on }    {D_{[0,\infty)}(\R )}\ .
	\end{equation}
	Here 
	$t\mapsto f(t)=(f_k(t):k\in \mathbb{N}_0)$ is the unique global solution to the \textbf{mean-field equation}
	\begin{align}\label{mastereq}
\frac{df_{k}(t)}{dt} = \mu_{k+1} (t) \, f_{k+1}(t)+\mu_{k-1} (t) \, f_{k-1}(t) -2\mu_{k} (t)  f_{k}(t)\ ,\quad k\geq 0,
	\end{align}
    where \begin{equation}\label{birthrate}
\mu_k(t):=\sum \limits_{l\geq 1}c(k,l)f_l(t)\end{equation} 
	with initial condition $f(0)$ given by (\ref{initialcon0b}). 
	Here we use the convention $f_{-1}(t)\equiv 0$ for all $t\geq 0$.
\end{thm}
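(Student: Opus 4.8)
The plan is to run the classical Dynkin-martingale route to a law of large numbers, the only genuine difficulty being the control of the superlinear kernel through uniform moment bounds. For a bounded $h:\N_0\to\R$ set $\Phi_h^L(\eta):=\frac1L\sum_{x\in\Lambda}h(\eta_x)=\sum_{k\geq0}F_k^L(\eta)\,h(k)$, so the target process is $\Phi_h^L(\eta(t))$. First I would compute $\mathcal{L}\Phi_h^L$ from \eqref{eq:GenMis}. Using $q(x,y)=1/(L-1)$ and symmetrizing the two one-sided increments via \eqref{symmetry}, one gets, with the empirical analogue $\mu_k^L(\eta):=\sum_{l\geq1}c(k,l)F_l^L(\eta)$ of \eqref{birthrate},
\begin{equation}\label{genphi}
(\mathcal{L}\Phi_h^L)(\eta)=\frac{L}{L-1}\sum_{k\geq0}F_k^L(\eta)\,\bigl(h(k-1)+h(k+1)-2h(k)\bigr)\,\mu_k^L(\eta)+R^L(\eta),
\end{equation}
where $R^L$ collects the diagonal correction $-\tfrac{1}{L(L-1)}\sum_x(\cdots)c(\eta_x,\eta_x)$. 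A shift of the summation index shows that the leading term of \eqref{genphi}, with $F^L\rightsquigarrow f$ and $\mu^L\rightsquigarrow\mu$, equals $\sum_k f_k\mu_k\bigl(h(k-1)+h(k+1)-2h(k)\bigr)=\tfrac{d}{dt}\langle f(t),h\rangle$ according to \eqref{mastereq}; this identifies the candidate drift.

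The technical heart, and the main obstacle, is that $\mu_k^L$ is unbounded in $k$: \eqref{kernel} only gives $\mu_k^L(\eta)\leq C\bigl(k^\mu M_\nu^L(\eta)+k^\nu M_\mu^L(\eta)\bigr)$, with $M_p^L(\eta):=\frac1L\sum_x\eta_x^p$. I would therefore establish uniform-in-$L$ propagation of moments on compact time intervals, and here the symmetry \eqref{symmetry} is decisive. Applying $\mathcal{L}$ to $M_p^L$, a single exchange changes $\eta_x^p+\eta_y^p$ by $p(\eta_y^{p-1}-\eta_x^{p-1})+\binom{p}{2}(\eta_x^{p-2}+\eta_y^{p-2})+\cdots$; the top, antisymmetric term is annihilated against the symmetric $c$, leaving a drift whose dominant contribution is $\sum_{x\neq y}c(\eta_x,\eta_y)(\eta_x^{p-2}+\eta_y^{p-2})\lesssim L^2\bigl(M^L_{\mu+p-2}M^L_\nu+M^L_{\nu+p-2}M^L_\mu\bigr)$. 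Interpolating these moments (Lyapunov/log-convexity) against the \emph{conserved} first moment $M_1^L=N/L\leq\rho$ from \eqref{alpha}, the exponent of $M_p^L$ is $(\mu+\nu+p-4)/(p-1)\leq1$ \emph{precisely because} $\mu+\nu\leq3$. Hence $\mathcal{L}M_p^L\leq C_p(1+M_p^L)$ and a Gronwall argument, started from the uniform bound $\E^L[M_3^L(\eta(0))]\leq\alpha$ in \eqref{initialcon0c} (and $m_4(0)<\infty$ from \eqref{initialcon0} at the limit level), yields $\sup_L\sup_{s\leq t}\E^L[M_3^L(\eta(s))]<\infty$ together with the lower-order bounds. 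Since $\mu,\nu\leq2$, these make $\frac1L\sum_x\mu_{\eta_x}^L\leq 2C\,M_\mu^LM_\nu^L$ uniformly integrable, and $|R^L|\leq\frac{8C\|h\|_\infty}{L-1}M_3^L$ gives $\E^L[|R^L|]=O(1/L)$.

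With the drift controlled I would run the martingale/tightness program. By Dynkin's formula $M_h^L(t):=\Phi_h^L(\eta(t))-\Phi_h^L(\eta(0))-\int_0^t(\mathcal{L}\Phi_h^L)(\eta(s))\,ds$ is a martingale whose bracket is $\int_0^t\sum_{x,y}q(x,y)c(\eta_x,\eta_y)\bigl[\Phi_h^L(\eta^{x\to y})-\Phi_h^L(\eta)\bigr]^2\,ds$; since each jump moves $\Phi_h^L$ by at most $4\|h\|_\infty/L$, this is bounded by $\frac{C'\|h\|_\infty^2}{L-1}\int_0^t M_\mu^L(\eta(s))M_\nu^L(\eta(s))\,ds$, which vanishes in expectation by the moment bounds; thus the martingale part is asymptotically negligible. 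Tightness of $(\Phi_h^L(\eta(\cdot)))_L$ in $D_{[0,\infty)}(\R)$ then follows from the Aldous--Rebolledo criterion: compact containment is automatic as $|\Phi_h^L|\leq\|h\|_\infty$, while the small-time oscillation is governed by the uniform integrability of the drift and the vanishing bracket.

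It remains to identify the limit. For any weak limit point $f(\cdot)$ I would pass to the limit in the martingale relation tested against finitely supported $h$. The nonlinearity enters only through $\mu_k^L$, i.e.\ through the two-body average $\frac1{L^2}\sum_{x,y}g(\eta_x)c(\eta_x,\eta_y)$; I would truncate the inner sum $\sum_{l\geq1}c(k,l)F_l^L$ at a level $K$, use the convergence $F_l^L\to f_l$ for bounded test functions on the truncated part, and bound the tail $\sum_{l>K}c(k,l)F_l^L$ uniformly in $L$ via the moment estimates of the second paragraph (uniform integrability of the superlinear weights). This shows every limit point solves the weak form of \eqref{mastereq} with finite moments; the global well-posedness and uniqueness under \eqref{kernel}, established in \cite{esenturk2018mathematical,globalEDG}, then force all limit points to coincide with the deterministic $f(\cdot)$, upgrading the convergence to the whole sequence and, by approximation, to arbitrary bounded $h$. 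I expect the moment propagation of the second paragraph — and the attendant uniform integrability needed to transport the superlinear kernel through the limit — to be the only delicate point; everything else is routine once those estimates are in place.
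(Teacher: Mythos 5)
Your proposal is correct and follows essentially the same route as the paper's proof: the symmetric second-difference cancellation that reduces the generator action on $p$-th moments to degree $p-2$, interpolation against the conserved density $N/L\leq\rho$ with $\mu+\nu\leq 3$ entering exactly as you say (the paper phrases this as H\"older's inequality with respect to the size-biased measure $kF_k^L L/N$, with a case table for $\nu\leq 1$ versus $\nu>1$ — the same estimate in different clothing), then Gronwall, a Dynkin martingale with carr\'e du champ of order $1/L$, Aldous tightness, and identification of limit points as weak solutions of \eqref{mastereq} with uniqueness taken from \cite{esenturk2018mathematical} (which is where the $m_4(0)<\infty$ assumption in \eqref{initialcon0} is used, as you correctly note). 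Your truncation/uniform-integrability argument for passing the nonlinear term to the limit is in fact spelled out slightly more explicitly than in the paper, but it is the same argument.
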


Following \cite{grosskinsky2019derivation}, if we further assume symmetry on the initial conditions, i.e. for all $L\geq 2$,
\begin{equation}\label{sym}
    \text{the distribution of } \{\eta_x(0):  x\in \Lambda\} \text{ is invariant under permutation of lattice sites,}
\end{equation}
then by symmetry of the dynamics, this holds also for the whole process $(\eta(t): \; t\geq 0)$ and 
the law of large numbers of Theorem \ref{thmfactorization} implies propagation of chaos in the following sense.

\begin{cor}\label{poc}
 Consider the process with generator \eqref{eq:GenMis} and conditions as in Theorem \ref{thmfactorization} together
with \eqref{sym}. Propagation of chaos holds, i.e. for all $m \geq 1$ as $L \to \infty$, the finite
dimensional processes
$((\eta_1(t), \eta_2(t), . . . , \eta_m(t)) :\; t\geq 0)$ converge weakly on path space $D_{[0,\infty )} (\N_0^m )$ to
independent, identical birth-death processes on $\N_0$ with distribution $f(t)$ and (nonlinear) master equation given by \eqref{mastereq}.   
\end{cor}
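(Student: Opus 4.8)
The plan is to deduce propagation of chaos from the empirical law of large numbers of Theorem~\ref{thmfactorization} by combining it with exchangeability and the classical equivalence, for exchangeable families, between convergence of the empirical measure to a \emph{deterministic} limit and the factorization of finitely many marginals, following the route of \cite{grosskinsky2019derivation}. The first step is to record that exchangeability is preserved by the dynamics: since $q(x,y)=1/(L-1)$ is permutation invariant and $c$ enters \eqref{eq:GenMis} only through occupation numbers, the generator commutes with the action of the symmetric group on $\Lambda$. Hence, if the initial law is invariant under site permutations \eqref{sym}, so is the law of the whole trajectory, and in particular the family of site-paths $\{(\eta_x(t):t\ge 0):x\in\Lambda\}$ is exchangeable in $D_{[0,\infty)}(\N_0)$ for every $L$.

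Next I would fix the candidate limiting single-site law $Q$, namely the law on $D_{[0,\infty)}(\N_0)$ of the time-inhomogeneous birth--death process started from $f(0)$ with generator $(\mathcal{G}_t g)(k)=\mu_k(t)\bigl(g(k+1)+g(k-1)-2g(k)\bigr)$, where $\mu_k(t)$ is given by \eqref{birthrate}. Its forward (Kolmogorov) equation is exactly \eqref{mastereq}, so its time-$t$ marginal equals $f(t)$; moreover $\mathcal{G}_t\,\mathrm{id}\equiv 0$, since $c(0,\cdot)=0$ forces $\mu_0\equiv 0$ and $0$ absorbing, so under $Q$ the coordinate is a non-negative martingale with constant mean $m_1(0)=\rho$, consistent with mass conservation \eqref{alpha}. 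Together with the moment control discussed below, this guarantees that the limiting chain is non-explosive and that $Q$ is well-defined.

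The heart of the argument is the tagged-coordinate computation. By the symmetry \eqref{symmetry} the birth and death rates of a single coordinate $\eta_1$ coincide and equal
\[
r_L(\eta):=\frac{1}{L-1}\sum_{y\neq 1}c(\eta_1,\eta_y)=\sum_{l\ge 0}c(\eta_1,l)\,\frac{L\,F_l^L(\eta)-\delta_{\eta_1,l}}{L-1}\ .
\]
I would show that this effective rate closes to $\mu_{\eta_1(t)}(t)$ by reading the right-hand side as a test of the empirical measure $F^L(\eta(t))$ against $c(\eta_1,\cdot)$ and invoking Theorem~\ref{thmfactorization}. Combining an Aldous-type tightness estimate for $\eta_1(\cdot)$ (from the moment bounds) with a martingale-problem identification then gives $\eta_1(\cdot)\Rightarrow Q$. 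By exchangeability the path-space empirical measure $\Theta^L:=\frac{1}{L}\sum_{x\in\Lambda}\delta_{(\eta_x(t):t\ge 0)}$ is tight in $\mathcal{P}\bigl(D_{[0,\infty)}(\N_0)\bigr)$, and the same martingale identification characterizes every limit point as the deterministic $Q$, whence $\Theta^L\to Q$ in law. The classical equivalence then upgrades this to $m$-chaos, i.e.\ $\P^L\circ(\eta_1(\cdot),\dots,\eta_m(\cdot))^{-1}\Rightarrow Q^{\otimes m}$ for every $m$. Finally, since the limiting coordinates are independent pure-jump processes whose jump times are a.s.\ disjoint, convergence on $\bigl(D_{[0,\infty)}(\N_0)\bigr)^m$ transfers to the joint Skorokhod space $D_{[0,\infty)}(\N_0^m)$, as claimed.

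I expect the main obstacle to be precisely the passage to the limit in the superlinear rate $r_L$. Because $c(k,l)\le C(k^\mu l^\nu+k^\nu l^\mu)$ is unbounded in $l$, the functional $\eta\mapsto\sum_{l}c(\eta_1,l)F_l^L(\eta)$ is an unbounded observable of the empirical measure, so the bounded-test-function statement of Theorem~\ref{thmfactorization} does not apply verbatim. Closing the rate and proving tightness of $\eta_1(\cdot)$ (and ruling out explosion of both the prelimit coordinates and the limit) require uniform integrability of the moments of order $\max(\mu,\nu)\le 2$ of $F^L(\eta(t))$, uniformly in $L$ and locally uniformly in $t$. This is exactly where the restriction $\mu+\nu\le 3$ and the moment hypotheses \eqref{initialcon0} and \eqref{initialcon0c} are used, propagated along the dynamics by the same estimates that underlie Theorem~\ref{thmfactorization} and the tagged-particle analysis of \cite{SGAKtagged}.
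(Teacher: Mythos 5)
Your proposal is correct and follows essentially the same route as the paper, which does not spell out a proof but declares the corollary ``a standard consequence'' of the law of large numbers in Theorem~\ref{thmfactorization} together with exchangeability \eqref{sym}, citing \cite{sznitman1991topics,daipra2017} and the exposition in \cite{grosskinsky2019derivation} --- precisely the tagged-coordinate martingale-problem argument with the empirical path measure and the Sznitman equivalence that you sketch. You also correctly identify the one genuinely new point relative to the bounded-rate setting of \cite{grosskinsky2019derivation}, namely that closing the superlinear rate $r_L$ requires convergence against unbounded observables $k\mapsto c(n,k)$, which is exactly what the paper's moment machinery (Proposition~\ref{gwall}, Corollary~\ref{gammamom}, and the truncation/uniform-integrability argument of Section~5) is designed to supply.
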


This is a standard consequence of the law of large numbers in Theorem \ref{thmfactorization} (see e.g.  \cite{sznitman1991topics, daipra2017}) and an exposition of a proof on the level of mean-field particle systems can be found in \cite{grosskinsky2019derivation}. 
Notice that \eqref{mastereq} can be identified as the master equation of a non-linear symmetric random walk on $\N_0$ with absorbing boundary at $k=0$ and with time-dependent rate at state $k$ equal to $\mu_k (t)$, as given in \eqref{birthrate}. According to Corollary \ref{poc}, this corresponds to the limiting dynamics of the occupation number for a fixed number of sites which evolve independently.

\section{Proof of the main result\label{sec:proof}}

\subsection{Moment bounds}

As a first step we collect some useful results on moments and establish a time-dependent bound on the moments of the processes $\eta_x(t)$ for $x\in\Lambda$. 
For any integer $n\geq 0$ denote the $n$-th moment by
\begin{equation}\label{lmoment}
	m_n^L (t):=\E^L \Big[\frac{1}{L}\sum_{x\in\Lambda} \big(\eta_x (t)\big)^n\Big] =\E^L \Big[\sum_{k\geq 0} k^n F_k^L (\eta (t))\Big]\ .
\end{equation}
We have $m_0^L (t)\equiv 1$ and with \eqref{initialcon0b}, $m_1^L (0)\to\rho$ and $m_2^L (0)\to m_2 (0)<\infty$. The uniform conditions \eqref{initialcon0c} on the moments further imply for all $L\geq 2$ that $m_2^L (0)\leq m_3^L (0)\leq \alpha,$ and with conservation of mass \eqref{alpha}, we have $m_1^L (t)\leq \rho$ for all $t\geq 0$, while higher moments typically grow in time for condensing systems (see e.g. \cite{godreche2016coarsening,jatuviriyapornchai2016coarsening,schlichting2020exchange}). The following result gives a general (but very rough) upper bound.

\begin{prop}\label{gwall}
	Assume the conditions of Theorem \ref{thmfactorization} and that   $m^L_{n}(0)<\infty$ for some integer $n\geq 2$ and for all $L\geq 2$. Then there exists a constant $  {C}_n>0$ independent of $L$ such that for all $L\geq 2, \; t\geq 0 $
    \begin{equation}
        m^L_{n} (t) \leq   m^L_{n}(0) e^{   C_nt}  
    \end{equation}
    Furthermore, if $n>\nu$, $\mu>1$ with $\mu ,\nu\in [0,2]$ defined in \eqref{kernel}, we have the polynomial bound
\begin{equation}
m_n^L(t)\leq \left(m_n^L(0)^{\frac{3-(\mu+\nu)}{n}} + 2C_n\rho\frac{3-(\mu+\nu)}{n} t \right)^{\frac{n}{3-(\mu+\nu)}} \ .
\end{equation}
\end{prop}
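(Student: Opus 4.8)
The plan is to track the evolution of $m_n^L(t)$ directly through the generator and reduce the statement to a scalar differential inequality that can be integrated. Since $E_{L,N}$ is finite, Dynkin's formula applies to the bounded test function $g(\eta)=\frac{1}{L}\sum_{x\in\Lambda}\eta_x^n=\sum_{k\geq 0}k^nF_k^L(\eta)$, so that $m_n^L(t)=m_n^L(0)+\int_0^t\E^L[(\mathcal{L}g)(\eta(s))]\,ds$ and $t\mapsto m_n^L(t)$ is absolutely continuous with $\frac{d}{dt}m_n^L(t)=\E^L[(\mathcal{L}g)(\eta(t))]$. First I would compute $(\mathcal{L}g)(\eta)$ explicitly: moving a particle from $x$ to $y$ changes $g$ by $\frac{1}{L}[(\eta_x-1)^n-\eta_x^n+(\eta_y+1)^n-\eta_y^n]$, and symmetrising the double sum in \eqref{eq:GenMis} using $c(k,l)=c(l,k)$ yields
\[
(\mathcal{L}g)(\eta)=\frac{1}{(L-1)L}\sum_{x\neq y}c(\eta_x,\eta_y)\,\Delta_n(\eta_x),\qquad \Delta_n(k):=(k+1)^n+(k-1)^n-2k^n,
\]
the boundary term at $k=0$ being harmless since $c(0,\cdot)=0$. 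Two elementary facts about the discrete second difference $\Delta_n$ drive everything: it is nonnegative by convexity (so $m_n^L$ is non-decreasing, as expected for condensing systems), and by the binomial expansion $\Delta_n(k)\leq c_n k^{n-2}$ for all $k\geq 1$, with an explicit $L$-independent constant $c_n$.

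Next I would insert the kernel bound \eqref{kernel}. Since $\Delta_n(\eta_x)\leq c_n\eta_x^{n-2}$ and $c(\eta_x,\eta_y)\leq C(\eta_x^{\mu}\eta_y^{\nu}+\eta_x^{\nu}\eta_y^{\mu})$, enlarging the sum over $x\neq y$ to the full product over $x,y\in\Lambda$ (all summands are nonnegative) factorises the double sum into products of empirical moments. Writing $M_a:=\frac{1}{L}\sum_{x}\eta_x^a$ and using $L/(L-1)\leq 2$ for $L\geq 2$, this gives the pointwise bound $(\mathcal{L}g)(\eta)\leq 2Cc_n\big(M_{\mu+n-2}M_{\nu}+M_{\nu+n-2}M_{\mu}\big)$, hence $\frac{d}{dt}m_n^L(t)\leq 2Cc_n\,\E^L[M_{\mu+n-2}M_{\nu}+M_{\nu+n-2}M_{\mu}]$. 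The decisive structural input is now the deterministic mass bound $M_1=N/L\leq\rho$ from \eqref{alpha}, together with the fact that $F^L(\eta)$ is a probability measure on $\N_0$, which makes the moment interpolation inequalities (Jensen and log-convexity of $a\mapsto M_a$) available pointwise in the configuration.

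For the rough exponential bound it then suffices to control each product by $m_n^L$ up to an $L$-independent constant: interpolating the lower-order moments between $M_1\leq\rho$ and $M_n$ (and between $M_0=1$ and $M_1$ when an exponent drops below $1$) bounds the bracket by $C_n M_n$ pointwise, so that $\frac{d}{dt}m_n^L\leq C_n m_n^L$ and Gr\"onwall's lemma gives $m_n^L(t)\leq m_n^L(0)e^{C_n t}$, uniformly in $L$ because $C_n$ depends only on $C,c_n,\rho,\mu,\nu$. For the refined polynomial bound under $n>\nu$ and $\mu>1$ I would instead keep one factor of $M_1$ explicit: log-convexity lets me spread $M_{\mu+n-2}M_{\nu}\leq M_1\,M_{n+\mu+\nu-3}$ (valid once $\min(\mu+n-2,\nu)\geq 1$; the complementary range is handled by interpolating $M_\nu$ against $M_0=1$), and Jensen's inequality $M_{n+\mu+\nu-3}\leq M_n^{(n+\mu+\nu-3)/n}$ — which is exactly where $\mu+\nu\leq 3$ is used, since it keeps the exponent at most $1$ — gives $\E^L[M_{\mu+n-2}M_{\nu}+M_{\nu+n-2}M_{\mu}]\leq 2\rho\,\E^L[M_n^{(n+\mu+\nu-3)/n}]\leq 2\rho\,(m_n^L)^{(n+\mu+\nu-3)/n}$, the last step again by Jensen (concavity, exponent $\leq 1$). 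This is precisely the differential inequality $\frac{d}{dt}m_n^L\leq 2C_n\rho\,(m_n^L)^{(n+\mu+\nu-3)/n}$ with $C_n=2Cc_n$, whose separable comparison ODE $\dot y=2C_n\rho\,y^{1-\beta}$ with $\beta=(3-\mu-\nu)/n$ integrates to the stated expression; the scalar comparison principle then finishes the proof.

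The hard part will be the step of controlling $\E^L[M_{\mu+n-2}M_{\nu}]$ — the expectation of a product of two empirical moments of different orders — uniformly in $L$ by a single power of the target moment $m_n^L$. On the complete graph the factorisation of the double sum is exact, so the difficulty is purely at the level of the deterministic moment inequalities: one must trade the excess degree $\mu+\nu-2$ against the conserved first moment $M_1\leq\rho$ while keeping the resulting power of $M_n$ at most $1$, so that Jensen can move the expectation inside. This balance is exactly what the constraints $\mu+\nu\leq 3$, $\mu>1$ and $n>\nu$ encode, as they keep all interpolation exponents inside $[0,1]$; the case distinction according to whether a given exponent stays in range is the only genuinely fiddly part, while the constant bookkeeping and the ODE comparison are routine.
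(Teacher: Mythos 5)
Your proposal follows essentially the paper's own strategy: apply the generator to $\frac{1}{L}\sum_x\eta_x^n$, use symmetry of $c$ to reduce to the discrete second difference $\Delta_n(k)\leq c_nk^{n-2}$, factorise the complete-graph double sum into products of empirical moments $M_a=\frac1L\sum_x\eta_x^a$, trade excess degree against the conserved mass $M_1=N/L\leq\rho$, and close with Gr\"onwall resp.\ a separable ODE comparison, with the constraint $\mu+\nu\leq 3$ entering exactly where you say it does. The one technical difference is where H\"older is applied: the paper works under the expectation with respect to the size-biased measures $kF_k^L L/N$ and chooses \emph{conjugate} exponents $(p_i,q_i)$ per a case table, so that the two moment factors carry total power $1/p_i+1/q_i=1$ and the bracket is bounded by exactly $m_n^L$ to the first power. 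Your pointwise Lyapunov interpolation instead yields $M_n$ to the power $\frac{n+\mu+\nu-4}{n-1}$, which is strictly less than $1$ when $\mu+\nu<3$, so your assertion that the bracket is bounded ``by $C_nM_n$ pointwise'' needs one more line: either $M_n^a\leq C\,M_n$ for $a<1$ using the lower bound $M_n\geq M_1=N/L$, which is bounded away from $0$ uniformly in $L$ because $N/L\to\rho>0$, or $M_n^a\leq 1+M_n$, at the cost of an additive constant and a slightly weaker form of the exponential bound. This is minor and fixable, but as written the step is incomplete --- and it is precisely the step the paper's conjugate-exponent bookkeeping is designed to sidestep.

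There is one claim in your polynomial-bound argument that is actually false as stated: the parenthetical that the complementary range ($\nu<1$) ``is handled by interpolating $M_\nu$ against $M_0=1$.'' That fallback gives $M_{\mu+n-2}M_\nu\leq \rho^{\nu}\,M_1^{(2-\mu)/(n-1)}M_n^{(n+\mu-3)/(n-1)}$, and the exponent $\frac{n+\mu-3}{n-1}$ exceeds the required $\frac{n+\mu+\nu-3}{n}$ whenever $n(1-\nu)>3-\mu-\nu$ (e.g.\ $\mu=2$, $\nu=0$, $n=4$ gives power $1$ versus the needed $3/4$), so the claimed polynomial rate is not recovered there; no rearrangement of interpolations against $M_0,M_1,M_n$ can do better, since the minimal achievable $M_n$-power for that term is $\frac{n+\mu-3}{n-1}$ when $\nu<1$. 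This does not ultimately sink you, because the paper's own proof of the polynomial bound is carried out only ``in case $n>\mu,\nu>1$'' (the hypothesis ``$n>\nu$, $\mu>1$'' in the statement reads as a typo for this), and in that regime your main split $M_{\mu+n-2}M_\nu\leq M_1M_{n+\mu+\nu-3}\leq\rho\,M_n^{(n+\mu+\nu-3)/n}$, valid precisely when $\min(\mu,\nu)\geq1$, together with Jensen to move the expectation inside, reproduces the paper's differential inequality $\frac{d}{dt}m_n^L\leq 2C_n\rho\,(m_n^L)^{1-(3-\mu-\nu)/n}$ and the stated conclusion exactly. So: drop the complementary-range parenthetical, state the polynomial bound under $1<\mu,\nu<n$, and supply the one-line lower bound on $M_n$ for the exponential estimate, and your argument is a correct, marginally more streamlined version of the paper's.
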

\begin{proof}
	Applying the generator \eqref{eq:GenMis} to the function $g(\eta)=\eta_x^{n}$ and some $x\in \Lambda$, we get
	\begin{equation}\label{genhm}
		\Lcal \eta_x^{n}=\frac{1}{L-1}\sum_{y\neq x} c(\eta_x,\eta_y)\left((\eta_x+1)^n+(\eta_x-1)^n-2\eta_x^n\right)=\frac{1}{L-1}\sum_{y\neq x} c(\eta_x,\eta_y) p_{n-2}(\eta_x)   {\ .}    
	\end{equation}
    where $p_{n-2}$ is a polynomial of degree $n-2$.
	Therefore, using that $k\mapsto\eta_x^{k}$ is non-decreasing and the bound \eqref{kernel} we find for some positive constant $C_n$ (independent of $L$)
	\begin{align*}
		\frac{d}{dt}m^L_n(t)  & = \frac{1}{L} \sum \limits_{x\in \Lambda} \E [\mathcal{L}\eta^n_x(t)] \\ & \leq C_n  \E^L\left[ \sum \limits_{k}k^{\mu+n-2}F^L_k(\eta(t)) \sum \limits_{l}l^{\nu}F^L_l(\eta(t))\right] \\&  \qquad\qquad {+} C_n\E^L\left[ \sum \limits_{k}k^{\nu+n-2}F^L_k(\eta(t)) \sum \limits_{l}l^{\mu}F^L_l(\eta(t))\right]  \\& \leq  C_n\rho  \left(\E^L\left[ \sum \limits_{k}k^{p_1(\mu+n-3)+1}F^L_k(\eta(t)) \right] \right)^{\frac{1}{p_1}}  \left(\E^L\left[\sum \limits_{l}l^{q_1(\nu-1)+1}F^L_l(\eta(t))\right]\right)^{\frac{1}{q_1}} \\&  + C_n\rho  \left(\E^L\left[ \sum \limits_{k}k^{p_2(\nu+n-3)+1}F^L_k(\eta(t)) \right] \right)^{\frac{1}{p_2}}  \left(\E^L\left[\sum \limits_{l}l^{q_2(\mu-1)+1}F^L_l(\eta(t))\right]\right)^{\frac{1}{q_2}}
	\end{align*}
using Hölder's inequality with respect to the size-biased measures $k\, F_k (\eta (t)) L/N$. We want to choose $p_i,q_i\in [1,\infty]$, $i=1,2$ such that $\frac{1}{p_i}+\frac{1}{q_i}=1$ for $i=1,2$ and 
    \begin{equation}\label{pqres}
        \begin{cases}
        p_1(\mu+n-3)+1\leq n \\
        q_1(\nu-1)+1 \leq n
        \end{cases} \; \text{ and } \; \begin{cases}
        p_2(\nu+n-3)+1\leq n \\
        q_2(\mu-1)+1 \leq n
        \end{cases}
    \end{equation}
To satisfy \eqref{pqres}, we select  $p_1,q_1$ according to the following table:\\

\begin{center}
\begin{tabular}{|c|c|}
\hline
 (i)  $\nu\leq 1$ or $n=\nu=2$  & (ii) $ n>\nu>1$ \\\hline
 $p_1=\infty, q_1=1$ & {$p_1=\frac{n-1}{n-\nu},\; q_1=\frac{n-1}{\nu-1}$}  
\\ 
\hline
\end{tabular}
\end{center}
Similarly, we select $p_2,q_2$ according to the following table:\\

\begin{center}
\begin{tabular}{|c|c|}
\hline
  (iii) $\mu\leq 1$ or $n=\mu=2$  & (iv) $n>\mu>1$   \\\hline
 {$p_2=\infty, q_2=1$}  & {$p_2=\frac{n-1}{n-\mu} , q_2=\frac{n-1}{\mu-1}$}
\\ 
\hline
\end{tabular}
\end{center}
Therefore, we find $\frac{d}{dt}m^L_n (t)\leq 2C_n\rho m^L_n(t)$ which by Gronwall's inequality implies
 \[
m^L_n(t)\leq m^L_n(0)e^{2C_n\rho t} \ . 
 \]
Notice that in case $n>\mu,\nu>1$, we find the polynomial bound:
\begin{align*}
 \frac{d}{dt}m^L_n (t)&\leq C_n \rho   (m^L_n(t))^{\frac{p_1(\mu+n-3)+1}{np_1}}(m^L_n(t))^{\frac{q_1(\nu-1)+1}{nq_1}}  \\& \qquad +C_n\rho (m^L_n(t))^{\frac{p_2(\mu+n-3)+1}{np_2}}(m^L_n(t))^{\frac{q_2(\nu-1)+1}{nq_2}}  \\& =  2 C_n\rho  (m^L_n(t))^{1-\frac{3-(\mu+\nu)}{n}}
 \end{align*}
which implies for \(\mu+\nu<3\)
\[
m_n^L(t)\leq \left(m_n^L(0)^{\frac{3-(\mu+\nu)}{n}} + 2C_n\rho\frac{3-(\mu+\nu)}{n} t \right)^{\frac{n}{3-(\mu+\nu)}} \ .
\]
\end{proof}

Assuming a uniform in $L$ bound on the initial third moment,  we have that the third moment remains bounded uniformly in $L$ in every compact time domain. 

\begin{cor}\label{mom3}
    Under the assumptions of Theorem \ref{thmfactorization}, for all $T>0$ there exists $D_T>0$ (independent of $L$) such that 
    \begin{equation}
		m^L_3(t)\leq D_T  \quad\mbox{ for all }  L\geq 2, \; t \in [0,T] 
    \end{equation} 
\end{cor}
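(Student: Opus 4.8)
The plan is to obtain the statement as an immediate specialization of Proposition \ref{gwall} to the case $n=3$. The first step is to recognize that the regularity assumption \eqref{initialcon0c} on the initial conditions is exactly a uniform (in $L$) bound on the third moment at time zero: by definition \eqref{lmoment} we have $m_3^L(0)=\E^L\big[\tfrac{1}{L}\sum_{x\in\Lambda}\eta_x^3(0)\big]\leq\alpha$ for all $L\geq 2$. In particular this yields $m_3^L(0)<\infty$, so the hypothesis of Proposition \ref{gwall} with $n=3$ is met, with the added benefit that the initial bound $\alpha$ does not depend on $L$.

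Next I would invoke the exponential bound from Proposition \ref{gwall}: there is a constant $C_3>0$, independent of $L$, such that $m_3^L(t)\leq m_3^L(0)\,e^{C_3 t}$ for all $t\geq 0$ and $L\geq 2$. Combining this with the $L$-uniform initial bound gives $m_3^L(t)\leq\alpha\,e^{C_3 t}$, again uniformly in $L$. Restricting to a compact interval $[0,T]$, the exponential factor is controlled by $e^{C_3 T}$, so it suffices to set $D_T:=\alpha\,e^{C_3 T}$ to conclude $m_3^L(t)\leq D_T$ for all $L\geq 2$ and $t\in[0,T]$.

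I do not anticipate any genuine obstacle in this argument, as the corollary is essentially a reading-off of the proposition once the correspondence between \eqref{initialcon0c} and a uniform initial third-moment bound is made explicit. The one point worth flagging is the choice of which bound to use: the polynomial estimate of Proposition \ref{gwall} requires $\mu>1$ and therefore need not be available for every kernel in the admissible range $0\leq\mu,\nu\leq 2$, $\mu+\nu\leq 3$, whereas the exponential bound holds for all $n\geq 2$ under the standing assumptions. Hence the exponential bound is the appropriate tool here, and its $L$-independent constant $C_3$ is precisely what guarantees that $D_T$ can be chosen independently of $L$.
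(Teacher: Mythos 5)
Your proposal is correct and coincides with the paper's own (implicit) argument: the corollary is read off from the exponential bound of Proposition \ref{gwall} with $n=3$, using that \eqref{initialcon0c} gives $m_3^L(0)\leq\alpha$ uniformly in $L$, so that $D_T=\alpha e^{C_3 T}$ works. Your remark that the exponential (rather than the polynomial) bound is the right tool, since the latter needs $\mu>1$, is also accurate.
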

Recall that due to symmetry of the kernel $c(k,l)$, the process \eqref{eq:GenMis} on the complete graph has $L$ absorbing states, corresponding to the cases where all the particles accumulate on one of the $L$ sites of the lattice. It is immediate that the uniform in $L$ bound on the second moment of the process $\eta$ implies that the time to absorption converges to infinity as $L\to \infty.$

\begin{cor}
 Let $T_L=\inf \{ t\geq 0: \eta_x(t)=N \text{ for some } x\in\Lambda  \}$. Then $T_L \overset{d}{\longrightarrow} \infty$.   
\end{cor}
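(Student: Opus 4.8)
The plan is to exploit the uniform moment bound of Corollary \ref{mom3} together with the observation that any absorbing configuration carries an anomalously large second moment, so that absorption before a fixed time becomes increasingly unlikely as $L\to\infty$. Recall that the $L$ absorbing states are exactly the configurations in which a single site holds all $N$ particles, and that these are fixed points of the complete-graph dynamics. Hence, fixing $t>0$, on the event $\{T_L\leq t\}$ the process is already absorbed at time $t$, so that $\frac1L\sum_{x\in\Lambda}\eta_x^2(t)=N^2/L$ $\P^L$-almost surely on this event.

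First I would bound the expected second moment from below by restricting the expectation to the absorption event. Since $\frac1L\sum_{x}\eta_x^2(t)\geq 0$ always and equals $N^2/L$ on $\{T_L\leq t\}$, we obtain the Markov-type estimate
\begin{equation*}
	m_2^L(t)=\E^L\Big[\frac{1}{L}\sum_{x\in\Lambda}\eta_x^2(t)\Big]\geq \frac{N^2}{L}\,\P^L(T_L\leq t)\ ,
	\qquad\text{hence}\qquad
	\P^L(T_L\leq t)\leq \frac{L}{N^2}\,m_2^L(t)\ .
\end{equation*}
Next I would control $m_2^L(t)$ uniformly in $L$: because $k^2\leq k^3$ for all $k\in\N_0$ we have $m_2^L(t)\leq m_3^L(t)$, and Corollary \ref{mom3} supplies a constant $D_t>0$, independent of $L$, with $m_3^L(t)\leq D_t$ for $t$ in any compact interval. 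Combining the two displays gives $\P^L(T_L\leq t)\leq L D_t/N^2$.

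Finally, I would pass to the thermodynamic limit \eqref{thermo} with $\rho>0$. Since $N/L\to\rho>0$, we have $N^2/L=(N/L)^2\,L\to\infty$, so that $L/N^2\to 0$ and therefore $\P^L(T_L\leq t)\to 0$ for every fixed $t$. As $t$ was arbitrary, $\P^L(T_L>t)\to 1$ for all $t\geq 0$, which is precisely the statement of convergence in distribution $T_L\overset{d}{\longrightarrow}\infty$. The argument is essentially mechanical once the uniform moment bound is available; the only point requiring any care is the identification of the normalized second moment as exactly $N^2/L$ on the absorption event, which is what makes the factor $N^2/L\to\infty$ appear and which relies on the absorbing states being genuine fixed points of the dynamics under the symmetry assumption \eqref{symmetry}.
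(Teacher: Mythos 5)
Your proof is correct and takes essentially the same route as the paper: both identify $\{T_L\leq t\}$ with the time-$t$ event that a single site carries all $N$ particles (using that the absorbing states are fixed points, since symmetry gives $c(N,0)=0$), both invoke the uniform bound $m_2^L(t)\leq m_3^L(t)\leq D_t$ from Corollary \ref{mom3}, and both arrive at the identical estimate $\P^L(T_L\leq t)\leq L\,D_t/N^2\to 0$ under \eqref{thermo} with $\rho>0$. The only cosmetic difference is the intermediate step — the paper applies a union bound over sites together with Markov's inequality for each $\eta_x(t)$, whereas you restrict the expectation of the empirical second moment to the absorption event, where it equals $N^2/L$ exactly; the two computations produce the same bound.
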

\begin{proof}
With Markov's inequality we have for all $t\geq 0$
    \begin{align*}
    \P^L \left( T_L\leq t \right) &= \P^L  \left(  \bigcup \limits_{x\in \Lambda} \{ \eta_x(t)=N \}\right) \leq \sum \limits_{x\in \Lambda} \P^L  \left( \eta_x(t)\geq N \right) \\&\leq \frac{\sum \limits_{x\in \Lambda} \E^L \left[ \eta^2_x(t) \right]}{N^2} = \frac{L}{N} \frac{m^L_2(t)}{N}\leq \frac{L}{N} \frac{D_t}{N} \to 0 \text{ as } L,N\to \infty \ ,
    \end{align*}
    where in the last line we used Corollary \ref{mom3} and $m_2^L (t)\leq m_3^L (t)$.
\end{proof}

\subsection{Tightness}

\begin{prop}\label{tightness}
	Consider the process with generator \eqref{eq:GenMis} and conditions as in Theorem \ref{thmfactorization}. Denote by $\mathbb{Q}^L_h$ the law of the process $t \mapsto H(\eta(t)):=\langle F^L(\eta(t)),h \rangle$ on path space $D_{[0,\infty)}(\R)$, which is the image measure of $ \mathbb{P}^L$ under the mapping $\eta\mapsto \langle F^L(\eta),h \rangle$. Then $\mathbb{Q}^L_h$ is tight as $L \to \infty$.
\end{prop}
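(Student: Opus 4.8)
The plan is to verify the Aldous–Rebolledo tightness criterion in $D_{[0,\infty)}(\R)$ for the real-valued semimartingales $t\mapsto H(\eta(t))=\langle F^L(\eta(t)),h\rangle$. Since $F^L(\eta)$ is a probability measure on $\N_0$, we have $|H(\eta(t))|\le\|h\|_\infty$ uniformly in $t$ and $L$, so the compact-containment part of the criterion is immediate and only the modulus-of-continuity condition remains. Writing $g(\eta)=\langle F^L(\eta),h\rangle$, Dynkin's formula gives the decomposition $H(\eta(t))=H(\eta(0))+\int_0^t \Lcal g(\eta(s))\,ds+M^L(t)$, where $M^L$ is a martingale with predictable quadratic variation $\langle M^L\rangle(t)=\int_0^t\Gamma(\eta(s))\,ds$ and carré du champ $\Gamma=\Lcal(g^2)-2g\,\Lcal g$. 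It then suffices to verify the Aldous increment condition separately for the finite-variation part $A^L(t)=\int_0^t\Lcal g\,ds$ and to control the increments of $\langle M^L\rangle$.

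First I would bound the drift pointwise. Using the symmetry \eqref{symmetry} of the kernel, the two difference terms produced by a jump $x\to y$ can be symmetrised into the discrete-Laplacian form $\Lcal g(\eta)=\frac{1}{L(L-1)}\sum_{x\ne y}c(\eta_x,\eta_y)\bigl(h(\eta_x+1)+h(\eta_x-1)-2h(\eta_x)\bigr)$. The second difference is bounded by $4\|h\|_\infty$, and by \eqref{kernel} together with $\sum_{x\ne y}\eta_x^\mu\eta_y^\nu\le(\sum_x\eta_x^\mu)(\sum_y\eta_y^\nu)$ one gets $\frac{1}{L(L-1)}\sum_{x\ne y}c(\eta_x,\eta_y)\le 2C\tfrac{L}{L-1}\langle F^L(\eta),k^\mu\rangle\,\langle F^L(\eta),k^\nu\rangle$, writing $\langle F^L(\eta),k^\mu\rangle:=\sum_k k^\mu F^L_k(\eta)$. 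Because $F^L(\eta)$ is a probability measure and $\mu+\nu\le 3$ with $\mu,\nu\le 2$, Jensen's inequality applied to $k^\mu=(k^3)^{\mu/3}$ (with $\mu/3\le 1$) yields $\langle F^L(\eta),k^\mu\rangle\,\langle F^L(\eta),k^\nu\rangle\le\langle F^L(\eta),k^3\rangle^{(\mu+\nu)/3}\le 1+\langle F^L(\eta),k^3\rangle$. Hence $|\Lcal g(\eta)|\le K_h\bigl(1+\langle F^L(\eta),k^3\rangle\bigr)$ for a constant $K_h$ depending only on $\|h\|_\infty$ and $C$, and taking expectations together with Corollary \ref{mom3} gives a bound on $\E^L[|\Lcal g(\eta(s))|]$ that is uniform in $L$ and $s\in[0,T]$.

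The main obstacle is extracting the factor $\theta$ required in the Aldous increment condition from this merely $L$-uniform (rather than pathwise) bound, since the integrand is unbounded. Here I would use the strong Markov property at a stopping time $\tau\le T$: conditioning on $\mathcal{F}_\tau$, we have $\E^L\bigl[\int_\tau^{\tau+\theta}|\Lcal g(\eta(s))|\,ds\bigr]=\E^L\bigl[\int_0^\theta \psi_u(\eta(\tau))\,du\bigr]$ with $\psi_u(\zeta):=\E^{L,\zeta}[|\Lcal g(\eta(u))|]$, and the exponential moment bound of Proposition \ref{gwall} gives $\psi_u(\zeta)\le K_h\bigl(1+\tfrac1L\sum_x\zeta_x^3\,e^{C_3 u}\bigr)$. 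It then remains to control $\E^L[\tfrac1L\sum_x\eta_x^3(\tau)]$ at the stopping time. The key observation is that, by convexity of $k\mapsto k^n$, the bracket $(\eta_x+1)^n+(\eta_x-1)^n-2\eta_x^n\ge 0$, so $\Lcal(\tfrac1L\sum_x\eta_x^n)\ge 0$ and, since $E_{L,N}$ is finite, $t\mapsto\tfrac1L\sum_x\eta_x^3(t)$ is a bounded submartingale; optional stopping then gives $\E^L[\tfrac1L\sum_x\eta_x^3(\tau)]\le m_3^L(T)\le D_T$. Combining these estimates yields $\E^L[|A^L(\tau+\sigma)-A^L(\tau)|]\le K_h\bigl(\theta+D_T\,(e^{C_3\theta}-1)/C_3\bigr)$ for all $\sigma\le\theta$, which tends to $0$ as $\theta\downarrow 0$ uniformly in $L$, so $A^L$ satisfies the Aldous condition.

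Finally I would treat the martingale part. Each jump changes $g$ by at most $4\|h\|_\infty/L$, so $\Gamma(\eta)=\sum_{x\ne y}q(x,y)c(\eta_x,\eta_y)\bigl(g(\eta^{x\to y})-g(\eta)\bigr)^2\le\frac{16\|h\|_\infty^2}{L^2}\cdot\frac{1}{L-1}\sum_{x\ne y}c(\eta_x,\eta_y)$, which by the same computation as for the drift is bounded by $K_h'\bigl(1+\langle F^L(\eta),k^3\rangle\bigr)/L$. Consequently $\E^L[\langle M^L\rangle(\tau+\sigma)-\langle M^L\rangle(\tau)]=O(1/L)$ uniformly in the stopping times, so the quadratic variation vanishes in the limit and the Rebolledo condition for $M^L$ holds trivially. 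Assembling the drift and quadratic-variation estimates in the Aldous–Rebolledo criterion, together with the compact containment from boundedness of $H$, yields tightness of $\Q^L_h$ on $D_{[0,\infty)}(\R)$.
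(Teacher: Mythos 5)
Your proof is correct, and its skeleton (Aldous-type criterion, Dynkin decomposition, carr\'e du champ control of the martingale part, uniform third-moment bounds via Proposition \ref{gwall} and Corollary \ref{mom3}) matches the paper's; the genuinely different step is how you extract smallness in $\theta$ from the drift increment at a stopping time. The paper avoids any Markov-property argument: it simply encloses the random interval, using Cauchy--Schwarz in time to get $\int_\tau^{\tau+\delta}|\Lcal H(\eta(s))|\,ds \le \sqrt{\delta}\,\bigl(\int_0^{t+\delta}|\Lcal H(\eta(s))|^2\,ds\bigr)^{1/2}$, which converts the stopped increment into a bound over a deterministic interval at the price of needing \emph{second} moments of the generator, $\E^L[|\Lcal H(\eta(s))|^2]\le 16^2C^2\|h\|_\infty^2\rho^3 m_3^L(s)$ via size-biased H\"older, and the same trick is reused for the increments of $\langle M_h\rangle$. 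You instead condition at $\tau$ via the strong Markov property, propagate with the Gronwall bound of Proposition \ref{gwall} restarted from the stopped configuration, and control $\E^L[\frac1L\sum_x\eta_x^3(\tau)]$ by observing that the symmetric kernel makes $t\mapsto\frac1L\sum_x\eta_x^3(t)$ a bounded submartingale (the bracket $(k+1)^n+(k-1)^n-2k^n\ge0$ by convexity, with $c(0,\cdot)=0$ killing the boundary term at empty sites — exactly the structure the paper exploits in \eqref{genhm}), so optional sampling applies. Both routes consume the same moment input (uniform $m_3^L$ on compacts, hence \eqref{initialcon0c}): yours needs only first moments of $|\Lcal H|$ but more probabilistic machinery (strong Markov, restarted Gronwall, submartingale optional sampling), while the paper's is more elementary at the cost of squaring the generator — harmless here since $\mu+\nu\le 3$ keeps $|\Lcal H|^2$ controlled by the third moment, but your variant would degrade more gracefully if only lower moments were available. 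For the martingale part the two arguments coincide in substance: your uniform $O(1/L)$ bound on increments of $\langle M^L\rangle$ is the same estimate the paper derives in \eqref{Mh2}--\eqref{Mh} and later recycles as \eqref{mgto0} to kill the martingale in the limit.
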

\begin{proof}
    Using a version of Aldous' criterion to establish tightness for $\mathbb{Q}_h^L$ (cf. Theorem 16.10 in \cite{billingsley2013convergence}), it suffices to show that for all $t\geq 0$
	\begin{equation}
		\label{eq:tight1}
		\lim_{a\to \infty}\limsup_{L\to\infty} \mathbb{P}^L \big[ |H(\eta (t))|\geq a\big] =0,
	\end{equation}
	 and that for any $\epsilon>0$, $t>0$,
	\begin{equation}
		\label{eq:tightness}
		\lim_{\delta_0\to 0^+}\limsup_{L\to \infty}\sup_{\delta\leq \delta_0}\sup_{\tau \in \mathfrak{T}_t} {\mathbb{P}}^L\big[ |H(\eta(\tau+\delta))-H(\eta(\tau))|>\epsilon\big] =0,
	\end{equation}
 where $\mathfrak{T}_t$ is the set of stopping times satisfying $\tau\leq t.$\\
Since $h$ is bounded, $\big|\big\langle F^L (\eta),h\big\rangle\big|\leq \|h\|_{\infty}$. Thus \eqref{eq:tight1} follows easily from Markov's inequality,
\[ \mathbb{P}^L \big[ |H(\eta (t))|\geq a\big]\leq \frac{\|h\|_{\infty}}{a}\quad\mbox{for all }L\geq 2\ .\]
Now fix $\delta_0> 0$, $\tau \in \mathfrak{T}_t$ and consider $\delta<\delta_0$.  By It\^{o}'s formula, we have for all $u>0$
	\begin{equation}
		\label{eq:itoetaMIM}
  H({\eta}(u+\delta))-H(\eta(u) )=\int_u^{u+\delta} \Lcal H({\eta} (s))\, ds +M_h (u+\delta)-M_h(u)\ ,
	\end{equation}
	where $(M_h (u) : u\geq 0)$ is a martingale with predictable quadratic variation given by integrating the 'carr\'e du champ' operator, i.e.  
\begin{equation}
\label{eq:mart}
\langle M_h \rangle (t)=\int_0^t \big(\mathcal{L}H^2-2H\mathcal{L}H\big) ({\eta}(s))ds\ .
\end{equation}

Using again Markov's inequality in \eqref{eq:tightness} and replacing $u$ by the bounded stopping time $\tau\leq t,$ we get the bound
	\begin{align}\label{toboundMim}
		\E^L\Big[\big| H(\eta(\tau+\delta))-H(\eta(\tau))\big|\Big]&\leq \E^L \left[\int \limits_{\tau}^{\tau+\delta}  |{\Lcal} H(\eta(s))|\,ds\right] +  \E^L \Big[ \left(M_h (\tau+\delta)-  M(\tau) \right)^2\Big]^{\frac{1}{2}} \ \nonumber \\
        &\leq \sqrt{\delta} \left( \int \limits_{0}^{t+\delta}\E^L \Big[  |{\Lcal} H(\eta(s))|^2\Big]\,ds \right)^{\frac{1}{2}} {+} \E^L \big[  \langle M_h \rangle (\tau+\delta)- \langle M_h \rangle (\tau)\big]^{\frac{1}{2}}  \ .
	\end{align}
	Here we used Cauchy-Schwarz inequality to get
    \begin{align*}
\E^L \left[\int \limits_{\tau}^{\tau+\delta}  |{\Lcal} H(\eta(s))|\,ds\right] & \leq  \left(\E^L \left[\int \limits_{\tau}^{\tau+\delta}  1\,ds\right] \right)^{1/2}  \left(\E^L \left[\int \limits_{\tau}^{\tau+\delta}  |{\Lcal} H(\eta(s))|^2\,ds\right] \right)^{1/2} \\ &
\leq  \sqrt{\delta}  \left(\E^L \left[\int \limits_{0}^{t+\delta}  |{\Lcal} H(\eta(s))|^2\,ds\right] \right)^{1/2}  \\& = \sqrt{\delta}  \left(\int \limits_{0}^{t+\delta} \E^L \left[ |{\Lcal} H(\eta(s))|^2\,ds\right] \right)^{1/2}\ ,
\end{align*}
as well as H\"older's inequality and the optional stopping theorem for the martingale $M_h^2 (t)-\langle M_h \rangle (t)$.\\
To compute $\Lcal H(\eta )$, we first recall that 
\begin{equation}
H(\eta)=\langle h, F^L({\eta}) \rangle =\sum_{k\geq 0}h(k) \frac{1}{L}\sum_{x\in\Lambda} \delta_{\eta_x,k}= \frac{1}{L}\sum_{x\in\Lambda} h(\eta_x)
\end{equation}
Therefore, $\mathcal{L} H(\eta )=\frac1L \sum_{x\in\Lambda} \mathcal{L} h(\eta_x )$ and
\begin{equation}
\mathcal{L}H({\eta})=
\frac{1}{L}\frac{1}{L-1}\sum_{x\in\Lambda}  \sum \limits_{y\neq x} c(\eta_x, \eta_y) \Big(h(\eta_x-1)-2h(\eta_x)+h(\eta_x+1)\Big) \ .
\label{compu}
\end{equation}
Thus, we have for all $\eta \in E_{L,N}$
\begin{equation*}
    |\mathcal{L}H({\eta})|\leq 16C\|h\|_{\infty} \left(\sum_{k}  k^{\mu}F^L_k(\eta) \right) \left(\sum_{l}  l^{\nu}F^L_l(\eta) \right) 
\end{equation*}
which for $\mu,\nu\leq 2$ and $\mu+\nu\leq 3$ implies that 
\begin{equation}
    \E^L\big[ |\mathcal{L}H({\eta(t)})| \big]\leq 16C\|h\|_{\infty} \rho m^L_2(t) 
\end{equation}
and
\begin{equation}\label{Lh2}
\E^L \big[\big|\mathcal{L}H({\eta}(t))\big|^2  \big]
\leq 16^2 C^2 \|h\|^2_{\infty} \rho^3  m^L_3(t) 
\end{equation}
where we used Jensen's and Hölder's inequality with respect to the probability measure $\{ \frac{kF^L_k(\eta)}{N/L} \}$ and that $N/L\leq \rho.$ 
Therefore, with $\delta\leq\delta_0$,  \eqref{toboundMim} becomes 
\begin{equation}\label{tight_bound2}
     \E^L\Big[\big| H(\eta(\tau+\delta))-H(\eta(\tau))\big|\Big] \leq \sqrt{ \delta_0 (t+\delta_0) }16C\|h\|_{\infty} \rho^{3/2} D_{t+\delta_0}^{1/2}  + \E^L \big[  \langle M_h \rangle (\tau+\delta)-  \langle M_h \rangle (\tau)\big]^{1/2} 
\end{equation}
    
Then, to control the last term of \eqref{tight_bound2}, it suffices to bound (uniformly in $L$) the 'carr\'e du champ' operator, for which we have 
\begin{align*}
    \left(\mathcal{L}H^2-2H\mathcal{L}H \right)(\eta) 
&= \frac{1}{L^2}\frac{1}{L-1}\sum_{\overset{x,y\in\Lambda}{y\neq x}} c(\eta_x, \eta_y) \big( h(\eta_x-1)-h(\eta_x) + h(\eta_y+1)-h(\eta_y)\big)^2\\
&\leq  \frac{16C\|h\|^2_{\infty}}{L-1}  \left(\sum_{k}  k^{\mu}F^L_k(\eta) \right) \left(\sum_{k}  k^{\nu}F^L_k(\eta) \right) \ .
\end{align*}
Therefore, similarly to \eqref{Lh2}, we find
\begin{equation}\label{Mh2}
   \E^L \big[ \big( \mathcal{L}H^2-2H\mathcal{L}H \big)^2 (\eta(t)) \big]  \leq \frac{16^2C^2\rho^{3}\|h\|^2_{\infty}}{(L-1)^2} m^L_3(t)  \ .
\end{equation}
 Therefore, by Cauchy-Schwarz, we find
\begin{align}\label{Mh}
\E^L \big[  \langle M_h \rangle (\tau+\delta)-  \langle M_h \rangle (\tau)\big]&=\E^L \bigg[ \int_{\tau}^{\tau+\delta} \big(\mathcal{L}H^2-2H\mathcal{L}H\big) ({\eta}(s))ds \bigg]\leq \nonumber  \\& \leq 
\sqrt{\delta_0}  \left( \E^L \bigg[ \int_{0}^{t+\delta} \Big| \big( \mathcal{L}H^2-2H\mathcal{L}H\big) ({\eta}(s))\Big|^2ds \bigg] \right)^{1/2} \nonumber  \\&\leq \sqrt{\delta_0 (t+\delta_0)}
\frac{16C\rho^{3/2}\|h\|^2_{\infty}}{L-1}  D_{t+\delta_0}^{1/2}
\end{align}
which vanishes as $\delta_0\to0^+$ (uniformly in $L$), finishing the proof.
\end{proof}

According to Prokhorov's theorem, the tightness result in Proposition 3.5 implies the existence of limit
points of the sequence
$(
\langle F^L (t), h\rangle : t \geq 0
)$
in the usual topology of weak convergence on path
space. By linearity of $h \mapsto \langle F^L (\eta(\cdot)), h\rangle$, convergence along a subsequence for a given test
function $h$ implies convergence for all bounded $h$. This establishes existence of limit processes
$( f (t) : t \geq  0)$ which may not be unique and still be random at this point.

\subsection{Characterization of the limit process}

Based on estimates \eqref{Mh2}, \eqref{Mh} in the proof of Proposition \ref{tightness}, we have that 
\begin{equation}\label{mgto0}
   \E^L\Big[ \langle M_h \rangle (t) \big] =\E^L\big[M^2_h(t)\big]\to 0 \quad \text{ as } \quad L\to \infty \quad \text{for all } \quad t\geq0\ .
\end{equation}

This implies that $M_h(t)\to 0$ as $L\to \infty$ in the $L^2$-sense. Now, writing \eqref{compu} in terms of the empirical process $F^L$ \eqref{fk}, we have
\begin{align} \label{Hlimit}
     \mathcal{L}H({\eta})&=
\frac{1}{L-1}\sum_{k\geq 1}  \sum \limits_{l\geq 1} c(k,l) F^L_k(\eta)  (F^L_l(\eta)L-\delta_{k,l})  \big( h(k-1)-2h(k)+h(k+1)\big) \nonumber \\&= \frac{L}{L-1}  \sum_{k,l\geq 1}  \big( h(k-1)-2h(k)+h(k+1)\big) c(k,l) F^L_k(\eta)   F^L_l(\eta) \nonumber\\
& \quad \quad -\frac{1}{L-1} \sum_{k\geq 1} \big( h(k-1)-2h(k)+h(k+1)\big) c(k,k) F^L_k(\eta)\ .
\end{align}

Therefore, inserting the limit points $(f(t): \; t\geq 0)$ in \eqref{Hlimit} and using \eqref{mgto0} in \eqref{eq:itoetaMIM}, they solve the following deterministic equation
\begin{equation}\label{limit}
\langle f(t),h \rangle - \langle f(0),h \rangle = \int \limits_0^t  \sum_{k\geq 1}\left(  \sum_{l\geq 1} c(k,l) f_l(\eta(s))\right)  \Big( h(k+1)-2h(k)+h(k-1)\Big) f_k(\eta(s)) \ ds
\end{equation}
for all bounded functions $h:\N_0\to \R$. Equation \eqref{limit} is equivalent to \eqref{mastereq} and has a unique global solution for symmetric kernels satisfying \eqref{kernel} when $m_4(0)<\infty$ (see \cite{esenturk2018mathematical} Theorem 6), establishing  the law of large numbers Theorem \ref{thmfactorization}. It is easy to see that in case of the symmetric product kernel $c(k,l)=(kl)^\gamma$ for $k,l\geq 1$, uniqueness can be guaranteed in a larger space, where the initial third moment (instead of the fourth) is assumed to be finite, i.e. when $m_3(0)<\infty.$

Theorem \ref{thmfactorization} and Proposition \ref{gwall} imply also convergence of moments in the following sense.

\begin{cor}\label{gammamom}
    Let $(f(t): \; t\geq 0)$ be the limit process of $(F^L(t):\; t \geq 0)$. If \( \sup_L m^L_{n+\veps}(0)<\infty \) for some \(\veps>0\), we have also convergence of moments i.e.
    \begin{equation}
       \E^L \Bigg[ \sum \limits_{k\geq 0} k^{n} |F^L_k(\eta(t)) -f_k(t)| \Bigg] \to 0 \quad \text{ as } L\to \infty \quad \text{ for all } t\geq 0\ . 
    \end{equation}
\end{cor}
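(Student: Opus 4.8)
The plan is to upgrade the weak law of large numbers of Theorem \ref{thmfactorization} from bounded test functions to the unbounded weight $h(k)=k^n$ by a truncation argument, the only genuinely new ingredient being a uniform-in-$L$ control of the tail contribution, which is supplied by the strengthened initial-moment assumption $\sup_L m^L_{n+\veps}(0)<\infty$. For a fixed $t\geq 0$ I regard $F^L(\eta(t))$ and $f(t)$ as a random, resp.\ deterministic, probability measure on $\N_0$ (recall $\sum_k F_k^L(\eta)=1$) and estimate the weighted total variation $\sum_{k\ge 0}k^n|F_k^L(\eta(t))-f_k(t)|$. Fixing a truncation level $K\in\N$, I split the sum into the low modes $0\le k\le K$ and the tail $k>K$, and treat the two pieces by entirely different mechanisms: bounded convergence for the finitely many low modes, and uniform integrability for the tail.

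For the tail, the first observation is that Proposition \ref{gwall} — whose proof carries over to the real exponent $r=n+\veps>1$, using the elementary second-difference bound $(a+1)^r+(a-1)^r-2a^r\le C_r(1+a)^{r-2}$ in place of the integer binomial expansion — together with $\sup_L m^L_{n+\veps}(0)<\infty$ yields a bound $m^L_{n+\veps}(t)\le B_T$ that is uniform in $L$ on every compact interval $[0,T]$, exactly in the spirit of Corollary \ref{mom3}. Since $F^L(\eta(t))$ is a probability measure, a Markov-type estimate then gives
\[
\E^L\Big[\sum_{k>K}k^n F_k^L(\eta(t))\Big]\le K^{-\veps}\,\E^L\Big[\sum_{k\ge 0}k^{n+\veps}F_k^L(\eta(t))\Big]=K^{-\veps}m^L_{n+\veps}(t)\le K^{-\veps}B_T ,
\]
uniformly in $L$. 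For the matching tail of the limit I use that $F_k^L(\eta(t))\to f_k(t)$ in probability for each fixed $k$ (Theorem \ref{thmfactorization} evaluated at the fixed time $t$, the limit path being deterministic and continuous), so that $\E^L[F_k^L(\eta(t))]\to f_k(t)$; Fatou's lemma applied to the nonnegative array $k^{n+\veps}\E^L[F_k^L(\eta(t))]$ then gives $\sum_k k^{n+\veps}f_k(t)\le\liminf_L m^L_{n+\veps}(t)\le B_T<\infty$, whence $\sum_{k>K}k^n f_k(t)\le K^{-\veps}B_T$ as well. Both tails are thus bounded by $K^{-\veps}B_T$ uniformly in $L$ and vanish as $K\to\infty$.

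For the low modes I again invoke $F_k^L(\eta(t))\to f_k(t)$ in probability; since $0\le F_k^L\le 1$, bounded convergence upgrades this to $\E^L[|F_k^L(\eta(t))-f_k(t)|]\to 0$ for each fixed $k$. Hence, for every fixed $K$,
\[
\E^L\Big[\sum_{k=0}^{K}k^n|F_k^L(\eta(t))-f_k(t)|\Big]=\sum_{k=0}^{K}k^n\,\E^L\big[|F_k^L(\eta(t))-f_k(t)|\big]\xrightarrow[L\to\infty]{}0 ,
\]
as a finite sum of vanishing terms. Combining the two bounds in the standard three-$\epsilon$ manner — given $\epsilon>0$, first choose $K$ with $2K^{-\veps}B_T<\epsilon/2$, then send $L\to\infty$ so the finite sum drops below $\epsilon/2$ — gives $\limsup_{L\to\infty}\E^L[\sum_k k^n|F_k^L(\eta(t))-f_k(t)|]\le\epsilon$, and since $\epsilon>0$ is arbitrary the corollary follows.

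I expect the main obstacle to be the uniform tail control, namely establishing the $L$-independent bound $m^L_{n+\veps}(t)\le B_T$ and thereby the uniform integrability of the weighted masses $\{k^n F_k^L(\eta(t))\}$. This is precisely where the hypothesis $\sup_L m^L_{n+\veps}(0)<\infty$ is used, and it requires the (routine but not purely cosmetic) extension of Proposition \ref{gwall} to the possibly non-integer exponent $n+\veps$, along with the Fatou interchange that transfers finiteness of the $(n+\veps)$-moment to the limit $f(t)$. Once these uniform bounds are secured, the truncation and the low-mode convergence are entirely standard.
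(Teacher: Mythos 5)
Your proof is correct, and it reaches the conclusion by a slightly different decomposition than the paper. The paper first strips the weight $k^n$ by a H\"older interpolation,
\[
\E^L \Big[ \sum_{k} k^{n} \big|F^L_k(\eta(t))-f_k(t)\big| \Big] \leq \E^L \Big[ \sum_{k} k^{n+\veps} \big|F^L_k-f_k\big| \Big]^{\frac{n}{n+\veps}} \E^L \Big[ \sum_{k} \big|F^L_k-f_k\big| \Big]^{\frac{\veps}{n+\veps}} ,
\]
bounds the first factor once and for all by $\big(2\sup_L m^L_{n+\veps}(0)e^{C_{n+\veps}t}\big)^{n/(n+\veps)}$ via Proposition \ref{gwall} and Fatou, and then only has to show that the \emph{unweighted} sum $\sum_k \E^L\big[|F^L_k(\eta(t))-f_k(t)|\big]$ vanishes; its truncation tail is controlled purely by the conserved first moment, $\sum_{k\geq M}F^L_k(\eta(t))\leq \rho/M$ by \eqref{alpha}, with the same bound for $f(t)$ by Fatou. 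You instead truncate the weighted sum directly and control the weighted tail by the uniform $(n+\veps)$-moment bound, $\E^L[\sum_{k>K}k^n F^L_k(\eta(t))]\leq K^{-\veps}m^L_{n+\veps}(t)\leq K^{-\veps}B_T$, again matched for $f(t)$ by Fatou. Both arguments rest on the same three ingredients — Proposition \ref{gwall} at exponent $n+\veps$, a Fatou interchange to transfer the moment bound to the limit, and per-$k$ $L^1$ convergence extracted from Theorem \ref{thmfactorization} at fixed $t$ (deterministic continuous limit path, so evaluation and bounded convergence apply) — so the difference is one of bookkeeping: your version is more direct (a single truncation, no interpolation step), while the paper's interpolation confines the use of the growing moment bound to a single time-dependent prefactor and lets the truncation tail be handled by the exactly conserved mass $\rho$. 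One point in your favour: you explicitly note that Proposition \ref{gwall} is stated for integer $n$ and must be extended to the real exponent $r=n+\veps$, supplying the second-difference bound $(a+1)^r+(a-1)^r-2a^r\leq C_r(1+a)^{r-2}$; the paper's own proof invokes $C_{n+\veps}$ without comment, and since $m^L_{n+\veps}(0)<\infty$ does not control $m^L_{n+1}(0)$, this extension is genuinely needed there too, so your remark quietly patches a small gap in the published argument.
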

\begin{proof}
   \begin{align*}
\E^L \Bigg[  \sum \limits_{k=0}^{\infty} k^{n} \big| F^L_k(\eta(t))-f_k(t)\big|\Bigg] & \leq  \E^L \Bigg[  \sum \limits_{k=0}^{\infty} k^{n+\veps} \big| F^L_k(\eta(t))-f_k(t)\big|\Bigg]^{\frac{n}{n+\veps}} \\& \qquad \qquad  \cdot \E^L \Bigg[  \sum \limits_{k=0}^{\infty} \big| F^L_k(\eta(t))-f_k(t)\big| \Bigg]^{\frac{\veps}{n+\veps}} \\ & \leq 
(2 \sup_L m^L_{n+\veps}(0) e^{C_{n+\veps}t})^{\frac{n}{n+\veps}}  \left( \E^L \Bigg[  \sum \limits_{k=0}^{\infty} \big| F^L_k(\eta(t))-f_k(t)\big|  \Bigg] \right)^{\frac{\veps}{n+\veps}}
\end{align*} 
where in the last line, we used Proposition \ref{gwall} and Fatou's Lemma.
Thus, it suffices to prove that 
\begin{equation}\label{convsum} \sum \limits_{k=0}^{\infty} \E^L \Bigg[ \big| F^L_k(\eta(t))-f_k(t)\big|  \Bigg] \to 0 \quad \text{ as } \quad L\to \infty \ . \end{equation}
For all $M>0,$ we have
\begin{align*}
  \sum \limits_{k=0}^{\infty} \E^L \Bigg[ \big| F^L_k(\eta(t))-f_k(t)\big|  \Bigg] &=   \sum \limits_{k<M} \E^L \Bigg[  \big| F^L_k(\eta(t))-f_k(t)\big|  \Bigg]   +  \sum \limits_{k\geq M} \E^L \Bigg[ \big| F^L_k(\eta(t))-f_k(t)\big|  \Bigg]   \\ & \leq   \sum \limits_{k<M}  \E^L \Bigg[ \big| F^L_k(\eta(t))-f_k(t)\big|  \Bigg]   +  \sum \limits_{k\geq M} \E^L \Bigg[  ( F^L_k(\eta(t))+ f_k(t))  \Bigg] 
\end{align*}

But 
\[
 \sum \limits_{k\geq M}  F^L_k(\eta(t)) \overset{1\leq k/M}{\leq} \frac{1}{M}
\sum \limits_{k\geq M} k F^L_k(\eta(t)) \leq \frac{\rho}{M}
\]
and the same applies also for $(f(t): \; t\geq 0)$ by Fatou's Lemma.  Therefore, based on Theorem \ref{thmfactorization}, we have for all $M>0$
$$
\limsup_{L\to \infty}  \sum \limits_{k=0}^{\infty}  \E^L \Bigg[\big| F^L_k(\eta(t))-f_k(t)\big|  \Bigg] \leq  0  + \frac{2\rho}{M} =\frac{2\rho}{M} \ . 
$$
Taking $M\to \infty$ concludes the proof.
\end{proof}

\section{Discussion: Symmetric product kernel}\label{prod}

If we consider the symmetric product kernel $c(k,l)=(kl)^{\gamma}$ for $k,l>0$, then the mean-field equation \eqref{mastereq} takes the following simple form
	\begin{equation}\label{prodmastereq}
\frac{df_{k}(t)}{dt} = m_{\gamma}(t) \Big( (k+1)^{\gamma}f_{k+1}(t)+(k-1)^{\gamma}f_{k-1}(t) -2k^{\gamma}  f_{k}(t) \Big)\ ,\quad k\geq 0,
	\end{equation}
where $m_{\gamma}(t)=\sum_k k^{\gamma}f_k(t)$ and $f_{-1} (t)\equiv 0$ for all $t\geq 0$. Provided a solution exists, recall that the moments $m_0 (t)=\sum_k f_k (t)\equiv 1$ and $m_1 (t)=\sum_k k\, f_k (t)\equiv \rho >0$ are conserved irrespectively of the parameter $\gamma >0$.

As it is reported in \cite{esenturk2018mathematical, Eichenberg04032021, ben2003exchange}, we consider the three different cases:

\begin{itemize}
    \item $0\leq \gamma \leq 3/2$: For all initial conditions with $m_3 (0)<\infty$, \eqref{prodmastereq} has a unique global solution. All moments $m_n (t)$ for $n>1$ diverge, $f_0 (t)\to 1$ and for all $k\geq 1$, $f_k (t)\to 0$ as $t\to\infty$.
    \item $3/2<\gamma\leq2$: Unique solutions exists only locally up to a gelation time
$T_{\text{gel}} < \infty$, so that $f_0 (t)\to 1$ as $t\to T_{\text{gel}}$ and again all moments with $n>1$ diverge.

\item $\gamma>2$: There is no solution for any time interval $[0,T)$, $(T>0)$ and all moments with $n>1$ explode instantaneously (instantaneous gelation).
\end{itemize}
In particular, for the average cluster size
$$\ell(t)=\frac{1}{1-f_0(t)}\sum_k kf_k(t)=\frac{\rho}{1-f_0(t)}$$
it is predicted in \cite{ben2003exchange} and confirmed in \cite{Eichenberg04032021} that we have
$$\ell(t)\varpropto \begin{cases}
    t^{\beta} &,\ \gamma<3/2\\
    \exp(Ct) &,\ \gamma=3/2\\
    (T_{\text{gel}}-t)^{\beta} &,\ 3/2<\gamma < 2
\end{cases}\ ,\quad \mbox{where}\quad \beta =(3-2\gamma )^{-1} \ . $$
In \cite{Eichenberg04032021} Theorem 1.4 it is also shown that the solutions $f_k (t)$ take a self-similar form on the coarsening scale $\ell (t)$ for $\gamma <2$.

In \cite{ben2003exchange}, the corresponding mean-field particle system with $N$ particles is analyzed heuristically. It is argued that empirical cluster size distribution converges to a solution of \eqref{prodmastereq}, and that for $\gamma >2$ the expected gelation time $T_N$ when the system reaches its absorbing state tends to $0$ as $(\log N)^{2-\gamma}$. Our result confirms the first claim on a rigorous level for all $\gamma\leq 3/2$. Note that the case $\gamma\leq 1$ was already covered by previous results \cite{grosskinsky2019derivation,schlichting2024variational,lam2025convergence}, but we are not aware of any other results for $\gamma >1$ so far. 
Convergence results for mean-field particle systems with $\gamma >3/2$ remain open for now. This will require new methods beyond controlling the empirical moments.

\section{Tagged particles}

\subsection{Notation and main result}
To follow the location $(X(t):t\geq 0)$ of a tagged particle, we extend the state space to $E:=E_{L,N} \times \Lambda$ and states $(\eta ,x)\in E$ describe the particle configuration $\eta\in E_{L,N}$ and the location $x\in\Lambda$ of the tagged particle. 
In the following, we denote again for simplicity of notation by $\mathbb{P}^L$ and $\mathbb{E}^L$ the law and expectation on the path space $\tilde{\Omega}=D_{[0,\infty)}(E)$ of the joint process $\big( (\eta(t),X(t)): \;t\geq 0\big)$. 
As before, we use the Borel $\sigma$-algebra for the discrete product topology on ${E}$, and the smallest $\sigma$-algebra on $\tilde{\Omega}$ such that $\omega\mapsto(\eta_t(\omega),X_t(\omega))$ is measurable for all $t\geq 0$. 	
The joint process is Markov and its evolution is described by the infinitesimal generator
\begin{multline}\label{taggedsyst}
	\tilde{\Lcal} G(\eta,x)= \sum_{y,z\in\Lambda}\frac{1}{L-1} c(\eta_{y},\eta_{z})(G({\eta}^{y\rightarrow z},x)-G({\eta},x))(1-\delta_{xy}) \\
	+ \sum_{z\in\Lambda} \frac{1}{L-1}c(\eta_x,\eta_z) \left[ \frac{\eta_x-1}{\eta_x}\left( G(\eta^{x\rightarrow z},x)-G(\eta,x)\right)+ \frac{1}{\eta_x}\left( G(\eta^{x\rightarrow z},z)-G(\eta,x) \right) \right]
\end{multline}
for all bounded continuous functions $G\in C_b ({E})$. 

We need further regularity assumptions on the initial conditions, namely a uniform bound of the ninth moment, for some fixed  $\alpha_9 >0$
\begin{align}
	\label{initialcon0c2}
	 \E \Big[\frac{1}{L}\sum_{x \in \Lambda}\eta_x^9(0)\Big]\leq \alpha_9 \quad\mbox{for all }L\geq 2\ .
\end{align}

We assume that $N-1$ particles are distributed on the lattice according to some initial conditions satisfying \eqref{initialcon0}, \eqref{initialcon0b}, \eqref{initialcon0c2} and the $N$-th particle (the tagged one) is located on position $X(0)$, increasing the value of $\eta_{X(0)} (0)$ by $1$ such that
\begin{equation}\label{initialcon0d}
	\E^L \left[ \eta_{X(0)}^4(0) \right]<   {\alpha}_4 \quad\text{holds for some fixed }\alpha_4 >0 \text{ and all }L\geq 2\ .
\end{equation}

For example, if we distribute $N-1$ particles uniformly, independently on $\Lambda$, \eqref{initialcon0}, \eqref{initialcon0b} are satisfied with Poisson distribution $f(0)$, and  condition \eqref{initialcon0c2} is satisfied for all $L\geq 2$. 
There are various ways to then choose the initial position of the tagged particle such that 
\eqref{initialcon0d} is satisfied. We could pick a fixed site (e.g. $X(0)=1$) or select one uniformly at random. On the other hand, selecting a site with the maximum occupation number would lead to logarithmic growth with respect to $L$ of $\eta_{X(0)}(0)$, violating \eqref{initialcon0d}.

The evolution of the occupation number on the tagged particle site is denoted by $\W^L(t):=\eta_{X(t)}(t)$. To study its dynamics, following \cite{SGAKtagged}, we apply the generator \eqref{taggedsyst} to a test function $G(\eta,x)=g(\eta_x)$
and plugging in the process \eqref{fk}, we find
\begin{multline}\label{genN}
	\hat{\Lcal}^L_{\eta(t)}  g(n)=\frac{L}{L-1}\sum_{k \geq 1} c(k,n)F^L_{k}(\eta(t)) \big(g(n+1)-g(n)\big)\\
   {+}\frac{L}{L{-}1}\bigg(\frac{n{-}1}{n} \sum_{k\geq 0} c(n,k)F^L_{k}(\eta(t))\big( g(n{-}1)-g(n)\big)+\frac{1}{n}\sum_{k\geq 0} c(n,k) F^L_{k}(\eta(t)) \left( g(k{+}1)-g(n) \right)  \bigg)\\
 -\frac{1}{L-1}c(n,n)\bigg(\frac{n+1}{n} \left( g(n{+}1)-g(n) \right)+ \frac{n-1}{n} \big( g(n{-}1)-g(n)\big)\bigg)  \ ,
\end{multline}
and using the symmetry of the kernel, it is reduced to the following form
\begin{multline}\label{genNsimple}
	\hat{\Lcal}^L_{\eta(t)}  g(n)=\frac{L}{L-1}\sum_{k \geq 1} c(k,n)F^L_{k}(\eta(t)) \big(g(n+1)+g(n-1)-2g(n)\big)\\
   {+}\frac{L}{L{-}1} \frac{1}{n}\sum_{k\geq 0} c(n,k)F^L_{k}(\eta(t))\big( g(k{+}1)-g(n-1)\big)\\
 -\frac{1}{L-1}c(n,n)\bigg(\frac{n+1}{n} \left( g(n{+}1)-g(n) \right)+ \frac{n-1}{n} \big( g(n{-}1)-g(n)\big)\bigg)  \ .
\end{multline}
Note that the process $(\W^L(t), t\geq 0)$ is itself not a Markov process, since its generator depends also on the state of the configuration $\eta (t)$. 
Based on Theorem \ref{thmfactorization}, we have that for each $n\in \N$, in the limit $L\to \infty$, \eqref{genN} converges {pointwise} to a time-inhomogeneous generator 
\begin{align}\label{limgen}
	\hat{\Lcal}_t g(n)=\mu_n(t)\big(g(n{+}1)-g(n)\big)&+ \frac{n{-}1}{n} \mu_n(t) \big( g(n{-}1)-g(n)\big)\nonumber \\&\quad+ \frac{1}{n}\sum_{k\geq 1} c(n,k{-}1) f_{k-1}(t) \left( g(k){-}g(n) \right) \ .
\end{align}
This generator describes a birth-death process with time-dependent birth and death rates $\mu_n (t)$ and $\frac{n-1}{n}\mu_n (t)$ as given in \eqref{birthrate}, and with additional long-range jumps when the tagged particle changes position. It is proved that the occupation number on the tagged particle position actually converges to this death-process as $L\to\infty$.

\begin{thm}\label{mainthm}
	Consider a tagged particle process with generator \eqref{genN} on the complete graph with symmetric  rates (\ref{kernel}) and initial conditions satisfying \eqref{initialcon0}, \eqref{initialcon0b}, \eqref{initialcon0c2} and \eqref{initialcon0d}. In the thermodynamic limit \eqref{thermo}, for any $\rho >0$, 
	\[
	\big( \W^L(t) : t   \geq 0\big) \to \big( \hat{\W}(t): t   \geq 0\big) \quad\text{weakly on }   D_{[0,\infty)}(E) ,
	\]
	where $\big( \hat{\W}(t): t\geq 0\big)$ is a time-inhomogeneous Markov process on $\N$ with generator $\hat{\Lcal}_t$ \eqref{limgen}.
\end{thm}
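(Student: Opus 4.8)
The plan is to prove convergence via the standard three-step programme for martingale problems: establish tightness of $(\W^L)_L$ on path space, identify every limit point as a solution of the martingale problem associated with the time-inhomogeneous generator $\hat{\Lcal}_t$ in \eqref{limgen}, and then conclude by uniqueness of that martingale problem. The essential difference from Theorem \ref{thmfactorization} is that the limit $\hat{\W}$ is genuinely random, so the associated martingales do \emph{not} vanish in the limit. Instead, the role of Theorem \ref{thmfactorization} here is to guarantee that the random \emph{environment} $F^L(\eta(\cdot))$ entering the generator \eqref{genNsimple} freezes to the deterministic profile $f(\cdot)$, so that $\hat{\Lcal}^L_{\eta(s)}\to\hat{\Lcal}_s$ pointwise, as already observed in the text preceding the statement.

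First I would establish uniform-in-$L$ moment bounds for the tagged occupation $\W^L(t)=\eta_{X(t)}(t)$ on every compact interval $[0,T]$. Applying \eqref{genN} to $g(n)=n^p$ with $p\leq 4$, the birth-death part contributes a polynomial of degree $p-2$, whereas the long-range jump term $\tfrac1n\sum_k c(n,k)F^L_k(\eta)\bigl(g(k{+}1)-g(n{-}1)\bigr)$ couples the tagged moments to \emph{higher} moments of the environment through the superlinear bound \eqref{kernel}. This coupling is exactly why the strengthened initial conditions \eqref{initialcon0c2} (ninth moment of the bulk) and \eqref{initialcon0d} (fourth moment of the tagged site) are imposed: together with the environment moment bounds of Proposition \ref{gwall} and Corollary \ref{mom3} they allow me to close a Gronwall estimate for $\E^L[(\W^L(t))^p]$ uniformly in $L$. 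These bounds yield the compact-containment part of Aldous' criterion and, via the Dynkin decomposition $\W^L(t)-\W^L(0)=\int_0^t\hat{\Lcal}^L_{\eta(s)}\iota(\W^L(s))\,ds+M^L(t)$ with $\iota(n)=n$ and control of the predictable quadratic variation of $M^L$ exactly as in Proposition \ref{tightness}, the oscillation estimate. Prokhorov's theorem then produces limit points, jointly with the (deterministic) limit of $F^L(\eta(\cdot))$.

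For the identification step, fix a bounded $g\colon\N_0\to\R$ and use that
\[
M_g^L(t)=g(\W^L(t))-g(\W^L(0))-\int_0^t\hat{\Lcal}^L_{\eta(s)}g(\W^L(s))\,ds
\]
is a martingale for the joint filtration of $(\eta,X)$. Along a convergent subsequence I would pass to the limit in each term, the only delicate point being the drift integral. Here one must upgrade the pointwise convergence $\hat{\Lcal}^L_{\eta(s)}g(n)\to\hat{\Lcal}_sg(n)$ to convergence of the time integrals evaluated along the converging trajectories; this combines the joint weak convergence of $\bigl(F^L(\eta(\cdot)),\W^L(\cdot)\bigr)$ with a uniform-integrability argument. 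The obstruction is that even for bounded $g$ the map $n\mapsto\hat{\Lcal}_sg(n)$ grows polynomially, since the long-range term is bounded only by $\tfrac{2\|g\|_\infty}{n}\mu_n(s)\lesssim n^{\mu-1}+n^{\nu-1}$, so the integrand is unbounded in the state variable. The moment bounds of the previous step, together with the convergence of moments from Corollary \ref{gammamom}, supply the required uniform integrability to exchange limit and integral. One concludes that every limit point solves the martingale problem for $(\hat{\Lcal}_t)$ with initial value $\W^L(0)=\eta_{X(0)}(0)$.

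Finally, I would verify that this time-inhomogeneous martingale problem is well-posed. Since $f(\cdot)$ is the unique global solution furnished by Theorem \ref{thmfactorization}, the rates $\mu_n(t)=\sum_{l\geq1}c(n,l)f_l(t)$ and the long-range jump kernel in \eqref{limgen} are fixed, locally bounded functions of time, and the moment bounds preclude explosion; uniqueness of the associated birth-death-with-jumps process then follows by standard arguments and identifies the common limit as the process $(\hat{\W}(t):t\geq0)$ with generator $\hat{\Lcal}_t$. I expect the identification step, and specifically controlling the unbounded long-range jump term in the drift integral uniformly in $L$, to be the main obstacle, as it is precisely the mechanism by which the tagged particle inherits the heavy tails of the condensing bulk.
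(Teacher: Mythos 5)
Your proposal follows essentially the same route as the paper: uniform-in-$L$ moment bounds for $\W^L$ via Gronwall (coupling the tagged moments to higher bulk moments, which is exactly why \eqref{initialcon0c2} and \eqref{initialcon0d} are imposed, cf.\ Proposition \ref{gwallN}), Aldous tightness through the Dynkin decomposition and carr\'e du champ control as in Proposition \ref{tight\W}, identification of limit points by showing the $L^1$ difference between $\hat{\Lcal}^L_{\eta(s)}$ and the frozen generator $\hat{\Lcal}_s$ along the trajectory vanishes --- using the law of large numbers in the form \eqref{convsum}, moment bounds and uniform integrability, just as in \eqref{toshowa}--\eqref{conv_to_zero} --- and finally uniqueness of the time-inhomogeneous martingale problem. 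The only slight imprecision is attributing the polynomial growth of $n\mapsto\hat{\Lcal}_s g(n)$ to the long-range term of order $n^{\mu-1}+n^{\nu-1}$, when the birth-death part contributes the larger order $n^{\mu}+n^{\nu}$, but your uniform-integrability argument covers this anyway.
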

If we consider the  the empirical mass processes
\[
t\mapsto P_k^L (\eta (t)):=\frac{1}{N} \sum_{x\in\Lambda} k\delta_{\eta_x (t),k}=\frac{L}{N} kF^L_k(\eta(t)) \in [0,1]\ ,\quad k\geq 1
\]
which counts the fraction of particles that are on sites with $k$ particles, then by Theorem \ref{thmfactorization}, we have that in the thermodynamic limit
\begin{equation}
    (P_k^L (\eta (t)):t\geq 0) \to (p_k(t):t\geq 0) \text{ weakly on } D_{[0,\infty)}(E) 
\end{equation}
where $p(t)=(p_k(t):k\in\N)$ is the unique global solution to the mean-field equation 
\begin{align}\label{sbmfe}
	\frac{dp_{k}(t)}{dt} &= \mu_{k-1}(t)p_{k-1}(t) +  \frac{k}{k+1}\mu_{k+1}(t) p_{k+1}(t) +\sum_{l\geq 1}\frac{1}{l} c(l,k-1) f_{k-1}(t)p_{l}(t) \nonumber\\
    &\qquad  {-\Big( \mu_k (t){+}\frac{k-1}{k}\mu_{k}(t){+ }\frac{1}{k}\sum_{l\geq 1} c(k,l-1) f_{l-1}(t)\Big)\, p_k(t)}\ ,\qquad k\geq 2\nonumber\\
    \frac{dp_1(t)}{dt}
    &=   {\frac{1}{2}\mu_{2} (t)\, p_{2}(t)+ \sum_{l\geq 1}\frac{1}{l}c(l,0) f_{0}(t) p_{l}(t)    -2\mu_1(t) p_{1}(t) }
\end{align}

Notice that equation \eqref{sbmfe} is the master equation corresponding to the generator \eqref{limgen}. Therefore, based on Theorem \ref{mainthm}, the law of large numbers for the size-biased empirical measures characterizes the behaviour of the occupation number of the tagged particle position in the thermodynamic limit.

\subsection{Proof of Theorem \ref{mainthm}}

\subsubsection{Bounds on moments}
In the following, we denote the  $n$-th moment of the process $\W^L(t)$ by
\begin{equation}\label{hatmom}
	\hat{m}^L_n(t):=\E^L \big[ (\W^L (t))^n \big] =\E^L\big[\left(\eta_{X(t)}(t)\right)^n\big]\ .
\end{equation}
Similarly to Proposition \ref{gwall}, we can establish the following (rough) bounds on the moments of this process.

\begin{prop}\label{gwallN}
    Assume that the sequence  $\big( m^L_{2n+1}(0)\big)_{L\geq 2}$ is bounded for some integer \(n\geq 2\). Then, for each $T>0$ there exists a constant   {$C(n,T)>0$} independent of $L$ such that
	\begin{equation}\label{eq:gwallN}
  	\hat{m}^L_n (t) \leq \hat{m}^L_n(0) e^{  C(n,T)t}\quad\mbox{for all }t\leq T\mbox{ and }L\geq   {2}\ .
	\end{equation}
\end{prop}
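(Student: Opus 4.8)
The plan is to mirror the proof of Proposition \ref{gwall}: control the time derivative of $\hat m^L_n(t)=\E^L[(\W^L(t))^n]$ by a constant multiple of itself and then invoke Gronwall's inequality. By Dynkin's formula applied to the test function $g(w)=w^n$, and since $\W^L(t)=\eta_{X(t)}(t)$ evolves, conditionally on the environment, according to the generator \eqref{genNsimple}, I would write
\[
\frac{d}{dt}\hat m^L_n(t)=\E^L\big[\hat{\Lcal}^L_{\eta(t)}\,g(\W^L(t))\big]
\]
and split $\hat{\Lcal}^L_{\eta(t)}g$ into three pieces according to \eqref{genNsimple}: the local second-difference (``diffusive'') term, the long-range gain/loss term carrying the prefactor $1/w$, and the $O(1/(L-1))$ diagonal correction. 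For $g(w)=w^n$ the second difference is a polynomial $p_{n-2}(w)$ of degree $n-2$, exactly as in \eqref{genhm}.

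For the diffusive term I would insert the kernel bound \eqref{kernel}, obtaining contributions of the type $w^{\,n-2+\nu}\sum_k k^{\mu}F^L_k(\eta)$ and $w^{\,n-2+\mu}\sum_k k^{\nu}F^L_k(\eta)$. The crucial point is the interplay with the constraint $\mu+\nu\le 3$: whenever the power of $w$ attains its maximal value $n$ (that is, $\nu=2$, respectively $\mu=2$), the complementary exponent satisfies $\mu\le 1$ (respectively $\nu\le 1$), so that the corresponding empirical moment $\sum_k k^{\mu}F^L_k(\eta)\le\sum_k kF^L_k(\eta)=N/L\le\rho$ is bounded almost surely by conservation of mass \eqref{alpha}; this yields a clean bound $\le C\rho\,\hat m^L_n(t)$. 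In the remaining cases the power of $w$ is strictly below $n$, and I would trade the $w$-power against the empirical moment by Hölder's (or Young's) inequality with respect to $\P^L$, choosing the exponents so that the resulting power of $w$ does not exceed $n$ (giving $\hat m^L_n(t)$) while the empirical moment, after the Jensen step $(\sum_k k^{b}F^L_k)^q\le \sum_k k^{bq}F^L_k$, has order at most $2n+1$ and is therefore uniformly bounded on $[0,T]$ by Proposition \ref{gwall} applied to the assumed finite moment $m^L_{2n+1}(0)$.

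The main difficulty is the long-range gain term $\tfrac{1}{w}\sum_k c(w,k)F^L_k(\eta)(k+1)^n$, in which a \emph{low} power of $w$ (namely $w^{\mu-1}$ or $w^{\nu-1}$, of exponent at most $1$) is coupled to a \emph{high} empirical moment of order up to $n+2$. Here I would again balance via Hölder: when $\mu\le 1$ (respectively $\nu\le 1$) the factor $w^{\mu-1}\le 1$ drops out since $\W^L\ge 1$, and the term is bounded directly by the environment moment $m^L_{n+\nu}(t)$; when $\mu>1$ the constraint forces $\nu\le 3-\mu$, and the admissible window for the conjugate exponents is governed precisely by the inequality $(\mu-1)(2n+1)\le n(n+1-\nu)$, which is exactly where the order $2n+1$ of the initial moment and the structural bound $\mu+\nu\le 3$ (rather than merely $\mu,\nu\le 2$) are needed. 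The diagonal correction is handled crudely using $\W^j=\eta_{X(t)}^j\le\sum_{x\in\Lambda}\eta_x^j=L\sum_k k^{j}F^L_k(\eta)$, so that the prefactor $1/(L-1)$ cancels the factor $L$ and leaves a bounded environment moment of order $n+2\le 2n+1$.

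Collecting these estimates gives $\tfrac{d}{dt}\hat m^L_n(t)\le C(n,T)\,\hat m^L_n(t)$ on $[0,T]$, where the additive constants arising from the bounded environment moments are absorbed into the multiplicative one, which is legitimate because $\W^L\ge 1$ forces $\hat m^L_n\ge 1$. Gronwall's inequality then yields \eqref{eq:gwallN}. I expect the simultaneous choice of Hölder exponents in the gain term to be the only delicate step, and the verification that all environment moments occurring stay at order at most $2n+1$ to be the place where the hypothesis on $m^L_{2n+1}(0)$ is genuinely consumed.
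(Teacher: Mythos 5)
Your overall architecture is the paper's: Dynkin's formula for $g(w)=w^n$, a term-by-term bound on the three pieces of \eqref{genNsimple}, absorption of additive constants via $\hat m^L_n\ge 1$, and Gronwall, with Proposition \ref{gwall} supplying the uniform bound on $m^L_{2n+1}(t)$ over $[0,T]$. Your handling of the diagonal $O(1/(L-1))$ term differs from the paper but is valid: the paper simply drops it, observing that since $l\mapsto l\,p^+_{n-1}(l)$ is increasing this term is negative, while your bound $(\W^L)^{n+2}\le L\sum_k k^{n+2}F^L_k(\eta)$ cancels the $1/(L-1)$ and leaves a bounded moment of order $n+2\le 2n+1$. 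The genuine gap is in the step you yourself flag as delicate: your interpolation of empirical moments is with respect to the \emph{plain} measure $F^L_k$ (your Jensen step $(\sum_k k^bF^L_k)^q\le\sum_k k^{bq}F^L_k$), and with that choice the required Hölder exponents do not exist on all of the admissible parameter range. Concretely, for the gain term your own governing inequality $(\mu-1)(2n+1)\le n(n+1-\nu)$ is false at $n=2$, $(\mu,\nu)=(2,1)$ (it reads $5\le 4$), a case explicitly allowed by \eqref{kernel} (the bound $c(k,l)\le C(k^2l+kl^2)$, which dominates the critical product kernel $(kl)^{3/2}$); more generally it fails for $n=2$ whenever $\mu>5/3$ and $\nu$ is near $3-\mu$. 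Worse, the same plain interpolation already breaks on the diffusive term: for $\mu=\nu=3/2$ the constraint $(n-2+\nu)q\le n$ forces $p=2n$, and the plain Jensen step then produces an environment moment of order $\tfrac32\cdot 2n=3n>2n+1$ for every $n\ge 2$, so no admissible exponent pair exists there either.

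The missing idea is exactly the device the paper uses in both Propositions \ref{gwall} and \ref{gwallN}: Hölder with respect to the \emph{size-biased} measure $\pi_k=kF^L_k(\eta)L/N$, writing $\sum_k k^aF^L_k=\tfrac NL\sum_k k^{a-1}\pi_k$ and paying one power of $k$ into the almost-surely conserved density $N/L\le\rho$ from \eqref{alpha}. This lowers the resulting moment order from $aq$ to $(a-1)q+1$, and the feasibility conditions become: for the diffusive term, with $p=n/(\mu-1)$ (environment moment of order $n+1\le 2n+1$), the $\W^L$-exponent condition is precisely $\mu+\nu\le 3$; for the gain term, with environment moment of order exactly $2n+1$, the condition relaxes from yours to
\begin{equation*}
(\mu-1)\,\frac{2n}{n+1-\nu}\;\le\; n \quad\Longleftrightarrow\quad 2\mu+\nu\;\le\; n+3\ ,
\end{equation*}
which holds for all $\mu,\nu\le 2$ with $\mu+\nu\le 3$ and all $n\ge 2$, with equality exactly at the corner $(n,\mu,\nu)=(2,2,1)$. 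So the size-biased step is not a cosmetic variant but is genuinely needed, and it — rather than your stated inequality — is where the hypothesis on $m^L_{2n+1}(0)$ is consumed. With that substitution your argument closes and coincides with the paper's proof; as written, it does not establish the proposition for $n=2$, nor, for exponents such as $\mu=\nu=3/2$, for any $n$.
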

\begin{proof}
	Applying the generator \eqref{genNsimple} to the function $g(l)=l^n$ for $n\in \N$, we get for some polynomial $q_{n-2}$  of degree $n-2$
	\begin{multline}\label{genhmn}
		\frac{ d\hat{m}^L_n(t)}{d t}=\E^L \left[\hat{\Lcal}^L_{\eta(t)} (\W^L(t))^n\right] =\frac{L}{L-1} \E^L \left[\sum_{k \geq 1} c(k,\W^L(t))F^L_{k}(\eta(t)) q_{n-2}(\W^L(t))\right] \\
		+  \frac{L}{L-1}\E^L \left[\frac{1}{\W^L(t)}\sum_{k\geq 0} c(\W^L(t),k) F^L_{k}(\eta(t)) \Big( (k+1)^n-(\W^L(t)-1)^n\Big)\right]\\
		-\frac{1}{L-1} \E^L \left[\frac{c(\W^L(t),\W^L(t))}{\W^L(t)}\left( (\W^L(t)+1)p_{n-1}^+(\W^L(t))-(\W^L(t)-1)p_{n-1}^+(\W^L(t)-1) \right) \right] 
	\end{multline}
	where the polynomial \(p^+_{n-1}(x):=(x+1)^n-x^n\) is of degree \(n-1\). 
	Since the functions $  {l}\mapsto lp_{n-1}^+(  {l})$ are increasing for all $n\in \N$, the last  line in \eqref{genhmn} is negative and therefore, we have for some positive constant $C_n$ and all \(t\in [0,T]\) 
	\begin{align*}
		\frac{ d\hat{m}^L_n(t)}{d t}&\leq \frac{L}{L-1} C\E^L \left[ \sum_{k \geq 1} k^{\mu}F^L_{k}(\eta(t)) (\W^L(t))^{\nu}  {q}_{n-2}(\W^L(t)) + \sum_{k \geq 1} k^{\nu}F^L_{k}(\eta(t)) (\W^L(t))^{\mu}  {q}_{n-2}(\W^L(t))\right]\\& +  \frac{L}{L-1}C\E^L \left[\sum_{k\geq 0} k^{\mu}(k+1)^{n}F^L_{k}(\eta(t))(\W^L(t))^{\nu-1} +\sum_{k\geq 0}k^{\nu} (k+1)^{n}F^L_{k}(\eta(t))(\W^L(t))^{\mu-1} \right]\\&
    \leq  C_n\E^L \left[ \sum_{k \geq 1} k^{\mu}F^L_{k}(\eta(t)) (\W^L(t))^{\nu+n-2} + \sum_{k \geq 1} k^{\nu}F^L_{k}(\eta(t)) (\W^L(t))^{\mu+n-2}\right]\\& +   C_n\E^L \left[\sum_{k\geq 0} k^{\mu+n}F^L_{k}(\eta(t))(\W^L(t))^{\nu-1} +\sum_{k\geq 0} k^{\nu+n}F^L_{k}(\eta(t))(\W^L(t))^{\mu-1} \right] 
    \\&  \leq
   \rho C_n \left( (m_{n+1}(t))^{\frac{2-\nu}{n}} (\hat{m}_n(t))^{1-\frac{2-\nu}{n}}+(m_{n+1}(t))^{\frac{2-\mu}{n}} (\hat{m}_n(t))^{1-\frac{2-\mu}{n}}\right)\\&+\rho C_n\left((m_{2n+1}(t))^{\frac{\mu+n-1}{2n}} (\hat{m}_n(t))^{\frac{n-\mu-1}{2n}}+(m_{2n+1}(t))^{\frac{\nu+n-1}{2n}}(\hat{m}_n(t))^{\frac{n-\nu+1}{2n}}\right)\\ & \leq C(n,T) \hat{m}_n(t) \ , 
	\end{align*}
    where in the penultimate step we used H\"older's inequality and at the last one, that $m^{L}_{n+1}(t)\leq m^{L}_{2n+1}(t)$, Proposition  \ref{gwall} and $\hat{m}^{L}_n(t)\geq 1$. 
 The result then follows by Gronwall's Lemma.
\end{proof}

\begin{remark}
In Proposition \ref{gwallN}, it suffices to assume that the sequence $\left(m^L_{r_nn+1}(0)\right)_{L\geq 2}$ is bounded for some integer $n\geq 2,$ where $r_n:=\max\{\frac{n+\mu-1}{n-\nu+1},\frac{n+\nu-1}{n-\mu+1},1 \}\leq 2
$.

\end{remark}



\subsubsection{Tightness}

Based on the methodology developed in Section 3, if we assume uniform in $L$ bounds on the moments  $\tilde{m}^L_n(0)$ and $m^L_{2n+1}(0)$  for sufficiently large $n$, we can establish tightness for the process $W^L(t)$. 

\begin{prop}\label{tight\W}
	Consider the process with generator \eqref{genN} and conditions as in Theorem \ref{mainthm}. Denote by $\mathbb{Q}^L$ the law of the process $t \mapsto \W^L(t)$ on path space $D_{[0,\infty)}(\N)$, which is the image measure of $ {\mathbb{P}}^L$ under the mapping $(\eta,x)\mapsto \eta_x$. Then $\mathbb{Q}^L$ is tight as $L \to \infty$.
\end{prop}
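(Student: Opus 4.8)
The plan is to follow the same route as in the proof of Proposition~\ref{tightness}, applying Aldous' criterion (Theorem~16.10 in \cite{billingsley2013convergence}) to the real-valued process $t\mapsto\W^L(t)$, which takes values in the discretely embedded closed set $\N\subset\R$. It then suffices to verify a compact-containment estimate $\lim_{a\to\infty}\limsup_{L\to\infty}\P^L[\W^L(t)\geq a]=0$ for all $t\geq0$, together with a modulus-of-continuity estimate
\[
\lim_{\delta_0\to0^+}\limsup_{L\to\infty}\sup_{\delta\leq\delta_0}\sup_{\tau\in\mathfrak{T}_t}\P^L\big[|\W^L(\tau+\delta)-\W^L(\tau)|>\epsilon\big]=0
\]
for every $\epsilon,t>0$. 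The first condition is immediate from Markov's inequality and the uniform moment bound of Proposition~\ref{gwallN}: for instance $\P^L[\W^L(t)\geq a]\leq\hat m_2^L(t)/a^2\leq\hat m_2^L(0)e^{C(2,t)t}/a^2$, which by \eqref{initialcon0c2} and \eqref{initialcon0d} (giving $\sup_L\hat m_2^L(0)<\infty$ and $\sup_L m_5^L(0)<\infty$) is bounded uniformly in $L$ and vanishes as $a\to\infty$.

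For the modulus of continuity I would apply It\^o's formula to the observable $G(\eta,x)=g(\eta_x)$ with $g=\mathrm{id}$, using the \emph{joint} generator \eqref{taggedsyst} (recall that $\W^L$ alone is not Markov), to write
\[
\W^L(\tau+\delta)-\W^L(\tau)=\int_\tau^{\tau+\delta}\hat\Lcal^L_{\eta(s)}(\mathrm{id})(\W^L(s))\,ds+M(\tau+\delta)-M(\tau),
\]
where $(M(u))$ is a martingale whose predictable quadratic variation integrates the carr\'e du champ $\Gamma=\tilde\Lcal G^2-2G\tilde\Lcal G$. Exactly as in \eqref{toboundMim}, Markov's inequality, Cauchy--Schwarz and optional stopping reduce everything to bounding, uniformly in $L$ and over $s\in[0,t+\delta_0]$, the two quantities $\E^L[|\hat\Lcal^L_{\eta(s)}(\mathrm{id})(\W^L(s))|^2]$ and $\E^L[\Gamma(\eta(s),X(s))]$, after which the drift contributes a factor $\sqrt{\delta_0}$ and the martingale a factor $(\delta_0\,\sup_s\E^L[\Gamma])^{1/2}$. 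For the drift one reads off from \eqref{genNsimple} that the symmetric random-walk part cancels for $g=\mathrm{id}$, leaving only the long-range (tagged-jump) term $\tfrac1n\sum_k c(n,k)F^L_k(k+2-n)$ and the $O(1/L)$ self-interaction correction; inserting the kernel bound \eqref{kernel} and using Cauchy--Schwarz to separate powers of $\W^L(s)$ from empirical moments $m_n^L(\eta(s))$ bounds this by a constant depending only on $t$ through Propositions~\ref{gwall} and \ref{gwallN}.

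The crucial new feature, compared with Proposition~\ref{tightness}, is that the martingale part does \emph{not} vanish in the limit --- this is expected, since $\W^L$ converges to a genuine fluctuating Markov process rather than to a deterministic trajectory --- so I cannot exploit a $1/(L-1)$ prefactor. It therefore suffices to show that $\E^L[\langle M\rangle(\tau+\delta)-\langle M\rangle(\tau)]\leq\delta_0\sup_{s\leq t+\delta_0}\E^L[\Gamma(\eta(s),X(s))]$ with the supremum bounded uniformly in $L$, which still tends to $0$ as $\delta_0\to0^+$. Writing $n=\W^L(s)$, a direct computation from \eqref{taggedsyst} gives
\[
\Gamma=\frac{L}{L-1}\Big[\Big(1+\tfrac{n-1}{n}\Big)\sum_k c(n,k)F^L_k+\frac1n\sum_k c(n,k)F^L_k\,(k+1-n)^2\Big]+O(1/L),
\]
with $F^L_k=F^L_k(\eta(s))$. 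The long-range term carrying the factor $(k+1-n)^2$ is the main obstacle: after \eqref{kernel} it produces mixed moments such as $\E^L[n^{\mu-1}\sum_k k^{\nu+2}F^L_k]$ (and the analogue with $\mu,\nu$ swapped). These are controlled by Cauchy--Schwarz, bounding the $\W^L$-factors via Proposition~\ref{gwallN} and the empirical-moment factors via Proposition~\ref{gwall}; since $\nu\leq2$, the worst empirical moment appearing after squaring is of order $2(\nu+2)\leq8$, which is precisely why the strengthened ninth-moment hypothesis \eqref{initialcon0c2} (together with \eqref{initialcon0d}) is imposed. Once this uniform bound on $\E^L[\Gamma]$ is established, both contributions to the modulus-of-continuity estimate vanish as $\delta_0\to0^+$ uniformly in $L$, and Aldous' criterion yields tightness of $\mathbb{Q}^L$.
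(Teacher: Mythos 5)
Your overall architecture coincides with the paper's proof: Aldous' criterion, the It\^o decomposition of $\W^L$ via the joint generator, the observation that the symmetric random-walk part of \eqref{genNsimple} cancels for $g=\mathrm{id}$, and control of drift and carr\'e du champ through Propositions \ref{gwall} and \ref{gwallN}; your expression for $\Gamma$ agrees with the paper's (expand $(2n-1)+(k+1-n)^2=k^2+n^2-2k(n-1)$). The drift estimate is also sound: bounding $\E^L[|\hat\Lcal^L_{\eta(s)}\W^L(s)|^2]$ is legitimate and, with size-biased Jensen, needs only $m_5^L$-type and $\hat m_4^L$-type moments, so it is in fact slightly stronger than the paper's choice of the exponent $3/2$.

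However, there is a genuine gap in your treatment of the quadratic variation. The claimed inequality
\begin{equation*}
\E^L\big[\langle M\rangle(\tau+\delta)-\langle M\rangle(\tau)\big]\;\leq\;\delta_0\sup_{s\leq t+\delta_0}\E^L\big[\Gamma(\eta(s),X(s))\big]
\end{equation*}
does not follow when $\tau$ is a stopping time: writing $\int_\tau^{\tau+\delta}\Gamma(s)\,ds=\int_0^\delta\Gamma(\tau+u)\,du$, you would need $\sup_\sigma\E^L[\Gamma(\sigma)]$ over \emph{stopping times} $\sigma\leq t+\delta_0$, which is not implied by bounds at deterministic times. Propositions \ref{gwall} and \ref{gwallN} control moments only at fixed times via Gronwall (the H\"older estimates there are taken inside expectations, so no pathwise supermartingale structure is available for optional stopping), and $\Gamma$ is unbounded, so large values of $\Gamma$ concentrating on the random interval $[\tau,\tau+\delta]$ cannot be ruled out from $\sup_s\E^L[\Gamma(s)]<\infty$ alone. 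This is exactly the point the paper addresses differently: it bounds $\E^L[|\Gamma|^{8/7}]$ uniformly in $L$ (this is where $(m_9^L)^{1/7}$ and hence hypothesis \eqref{initialcon0c2} enter) and then applies H\"older in time to the indicator of the random interval, obtaining $\E^L[\int_\tau^{\tau+\delta}\Gamma\,ds]\leq\delta_0^{1/8}\big(\E^L\int_0^{t+\delta}|\Gamma|^{8/7}ds\big)^{7/8}$ --- a power strictly greater than $1$ is indispensable here, just as in your own (correct) Cauchy--Schwarz treatment of the drift. Relatedly, your explanation of why ninth moments are needed is slightly misplaced: a bound on $\E^L[\Gamma]$ alone would only require $m_7^L$-type moments (via $(\sum_k k^{\nu+2}F^L_k)^2\lesssim\rho\sum_k k^{2\nu+3}F^L_k$); it is the $8/7$-th power forced by the stopping time that pushes the requirement up to $m_9^L$. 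Your argument becomes complete once you replace the first-moment bound on $\Gamma$ by this $L^{8/7}$ estimate, for which all the needed ingredients are already in your moment bounds.
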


\begin{proof}
  We will follow the  methodology  of proof of Proposition \ref{tightness}. Again, it suffices to show that for all $t\geq 0$
	\begin{equation}
		\label{eq:tight2}
		\lim_{a\to \infty}\limsup_{L\to\infty} {\mathbb{P}}^L \big[ W^L(t)\geq a\big] =0,
	\end{equation}
	 and that for any $\epsilon>0$, $t>0$,
	\begin{equation}
		\label{eq:tightness2}
		\lim_{\delta_0\to 0^+}\limsup_{L\to \infty}\sup_{\delta\leq \delta_0}\sup_{\tau \in \mathfrak{T}_t} {\mathbb{P}}^L\big[ |W^L(\tau+\delta)-W^L(\tau)|>\epsilon\big] =0,
	\end{equation}
 where $\mathfrak{T}_t$ is the set of stopping times satisfying $\tau\leq t.$\\
Relation \eqref{eq:tight2} follows easily from Markov's inequality and Proposition \ref{gwallN},
\[ \mathbb{P}^L \big[ W^L(t)\geq a\big]\leq \frac{\hat{m}_1^L(t)}{a}\leq \frac{\alpha_4e^{C(4,t)t}}{a}\quad\mbox{for all }L\geq 2 \ ,\]
where we used that $\hat{m}^L_1(t)\leq \hat{m}^L_4(t)$ and assumption \eqref{initialcon0d}.
Now fix $\delta_0> 0$, $\tau \in \mathfrak{T}_t$ and consider $\delta<\delta_0$.  By It\^{o}'s formula, we have for all $u>0$
	\begin{equation}
		\label{eq:itoetaMIMt}
  W^L(u+\delta)-W^L(u) =\int_u^{u+\delta} \hat{\Lcal}^L_{\eta(s)} W^L (s)\, ds +M (u+\delta)-M(u)\ ,
	\end{equation}
	where $(M (u) : u\geq 0)$ is a martingale with predictable quadratic variation given by integrating the 'carr\'e du champ' operator, i.e.  
\begin{equation}\label{eq:mart2}
\langle M \rangle(t)=\int_0^t \big(\hat{\Lcal}^L_{\eta(\cdot)}(W^L)^2-2W^L\hat{\Lcal}^L_{\eta(\cdot)}W^L\big) (s)ds\ .
\end{equation}

Using again Markov's inequality in \eqref{eq:tightness2} and replacing $u$ by the bounded stopping time $\tau\leq t,$ we have to bound
	\begin{align}\label{toboundMimN}
		\E^L\Big[\big| W^L(\tau+\delta)-W^L(\tau)\big|\Big]&\leq \E^L \Big[\int \limits_{\tau}^{\tau+\delta}  |\hat{\Lcal}^L_{\eta(s)} W^L(s)|\,ds\Big] +  \E^L \big[ \left(M (\tau+\delta)-  M(\tau) \right)^2\big]^{1/2} \ \nonumber \\
        &\leq \sqrt[3]{\delta} \left( \int \limits_{0}^{t+\delta}\E^L \Big[  |\hat{\Lcal}^L_{\eta(s)} W^L(s)|^{3/2}\Big]\,ds \right)^{2/3} {+} \E^L \big[  \langle M \rangle (\tau{+}\delta){-} \langle M \rangle(\tau)\big]^{1/2}  
	\end{align}
	where we used Cauchy-Schwarz, H\"older's inequality and the stopping time theorem for the martingale $M^2 (t)-\langle M \rangle(t)$. \\

\noindent
We will first compute $\hat{\Lcal}^L_{\eta} W^L$, 
\begin{equation}
\hat{\Lcal}^L_{\eta} W^L=\frac{L}{L{-}1} \frac{1}{W^L}\sum_{k\geq 0} c(W^L,k)F^L_{k}(\eta)( k-W^L+2)
 -\frac{1}{L-1}c(W^L,W^L) \frac{2}{W^L}
\end{equation}
Thus, we have for all $\eta \in E_{L,N}$
\begin{align*}
|\hat{\Lcal}^L_{\eta} W^L|&\leq 2C \left(  \sum_{k}  k^{\nu+1}F^L_k(\eta) (W^L)^{\mu-1}+\sum_{k}  k^{\mu+1}F^L_k(\eta) (W^L)^{\nu-1} \right) \\&+6C  \left(  \sum_{k}  k^{\nu}F^L_k(\eta) (W^L)^{\mu}+\sum_{k}  k^{\mu}F^L_k(\eta) (W^L)^{\nu} \right)  \\&+
\frac{4C}{L-1} (W^L)^{\mu+\nu-1} 
\end{align*}
which for $\mu,\nu\leq 2$ and $\mu+\nu\leq 3$ implies that for some constant $C_{\rho}$ independent of $L$
\begin{equation}
    \E^L \Big[  |\hat{\Lcal}^L_{\eta(s)} W^L(s)|^{3/2}\Big] \leq C_{\rho} \left( (m^L_6(t))^{5/8} (\hat{m}^L_4(t))^{3/8}+ (m^L_7(t))^{1/4} (\hat{m}^L_4(t))^{3/4} + \hat{m}^L_3(t)\right)\ .
\end{equation}
This is uniformly bounded in $L$ on \([0,t+\delta_0]\) by Propositions \ref{gwall} and \ref{gwallN} ans assumptions \eqref{initialcon0c2} and \eqref{initialcon0d}, i.e.
\begin{equation}\label{final1}
 \sup_L \E^L \Big[  |\hat{\Lcal}^L_{\eta(s)} W^L(s)|^{3/2}\Big]  \leq D_{t+\delta_0} \text{ for all} \; s\in [0,t+\delta_0]
\end{equation}
for some positive constant $D_{t+\delta_0}$ (independent of $L$).
Then, to control the last term of \eqref{toboundMimN}, it suffices to bound (uniformly in $L$) the 'carr\'e du champ' operator, for which we have 
\begin{align*}
   0\leq \hat{\Lcal}^L_{\eta}(W^L)^2-2W^L\hat{\Lcal}^L_{\eta}W^L&=\frac{L}{L-1} \frac{1}{W^L} \sum_k c(W^L,k)F^L_k(\eta)(k^2+(W^L)^2-2k(W^L-1))\\ & -\frac{2}{L-1}c(W^L,W^L) \\& \leq 2 \frac{1}{W^L} \sum_k c(W^L,k)F^L_k(\eta)(k^2+(W^L)^2 ) \\&\leq 2 \left(  \sum_k k^{\mu+2}F^L_k(\eta) (W^L)^{\nu-1} + \sum_k k^{\nu+2}F^L_k(\eta) (W^L)^{\mu-1}  \right) \\&+ 2\left( \sum_k k^{\mu}F^L_k(\eta) (W^L)^{\nu+1}+\sum_k k^{\nu}F^L_k(\eta) (W^L)^{\mu+1}\right) \ . 
\end{align*}
Thus, using that $\mu,\nu \leq 2$, we find for some positive constant $C'_{\rho}$
\begin{equation}
  \E^L \Big[  |\hat{\Lcal}^L_{\eta(s)}(W^L(s))^2-2W^L(s)\hat{\Lcal}^L_{\eta(s)}W^L(s)|^{8/7} \Big]  \leq C'_{\rho} \left( (m_6^L(t))^{5/7}(\hat{m}_4^L(t))^{2/7}+(m_9^L(t))^{1/7}(\hat{m}_4^L(t))^{6/7} \right) 
\end{equation}
which, is uniformly bounded in $L$ on \([0,t+\delta_0]\) by Propositions \ref{gwall} and \ref{gwallN}, i.e.
\begin{equation}
 \sup_L \E^L \Big[  |\hat{\Lcal}^L_{\eta(s)}(W^L(s))^2-2W^L(s)\hat{\Lcal}^L_{\eta(s)}W^L(s)|^{8/7}  \Big]  \leq D'_{t+\delta_0} \text{ for all} \; s\in [0,t+\delta_0]
\end{equation}
for some positive constant $D'_{t+\delta_0}$ (independent of $L$).
Therefore, we have 
\begin{align}
\E^L \big[  \langle M\rangle (\tau+\delta)-  \langle M \rangle(\tau)\big] &= \E^L \Big[ \int_{\tau}^{\tau+\delta} \big( \hat{\Lcal}^L_{\eta(\cdot)}(W^L)^2-2W^L\hat{\Lcal}^L_{\eta(\cdot)}W^L\big) (s)ds \Big] \nonumber \\& \leq \delta_0^{1/8}\left( \int  \limits_{0}^{t+\delta} \E^L \Big[  |\hat{\Lcal}^L_{\eta(s)}(W^L(s))^2-2W^L(s)\hat{\Lcal}^L_{\eta(s)}W^L(s)|^{8/7}  \Big] ds\right)^{7/8} \nonumber \\& \leq \delta_0^{1/8} ((t+\delta_0)D'_{t+\delta_0})^{7/8} \ . \label{final2}
\end{align}
Thus, based on estimates \eqref{final1} and \eqref{final2}, relation \eqref{toboundMimN} 
 becomes 
\begin{equation}\label{tight_bound2fin}
     \E^L\Big[\big| W^L(\tau+\delta)-W^L(\tau)\big|\Big] \leq \sqrt[3]{ \delta_0} \left((t+\delta_0) D_{t+\delta_0}\right)^{2/3} + \delta_0^{1/16} ((t+\delta_0)D'_{t+\delta_0})^{7/16}
\end{equation}
which vanishes as $\delta_0\to 0$, finishing the proof.  
\end{proof}
By Prokhorov’s theorem, the tightness result in Proposition \ref{tight\W} implies the existence of sub-sequential limit points of the sequence $(\W^L(t) : t \geq 0)$ in the usual Skorohod topology of weak convergence on path space $D_{[0, \infty )} (\N )$ (see e.g.\ \cite{billingsley2013convergence}, Section 16). We denote the law of any such limit by $\mathbb{Q}$.

\subsubsection{Characterization of the limit process}

In order to identify the limit $\mathbb{Q}$ we need to show that for all $t  \geq 0$ and $g \in C_b (\N )$, 
\begin{equation}\label{eq:marti}
	g(\omega(t))-g(\omega (0))-\int_0^t \hat{\Lcal}_s g(\omega(s))ds \; \text{ is a martingale w.r.t. }\mathbb{Q}\ ,
\end{equation}
where $\omega\in D_{[0,  \infty)} (\N)$ denotes an element in path space.  Together with the uniqueness of the martingale problem associated with $\hat{\Lcal}_t$, this implies convergence of $\mathbb{Q}^L$ and characterizes the limit $\mathbb{Q}$ as the law of the Markov process $(\hat{\W}(t): \; t\geq 0)$  with generator $\hat{\Lcal}_t$ \eqref{limgen}. 
Following the methodology in \cite{SGAKtagged} (Appendix C), we only need to prove that for all $  t\geq 0$
\begin{equation}\label{toshowa}
	\E^L \left[ \left| \int_0^{  {t}} \left( \hat{\Lcal}_{  {s}} g(\W^L({  {s}})) -\hat{\Lcal}^L_{\eta({  {s}})} g(\W^L({  {s}}))\right)d{  {s}} \right| \right]  \to 0\quad\mbox{as }
L\to\infty\ .
\end{equation}
Since the process $t\mapsto\hat{\Lcal}_{  {\eta(t)}}^L g(\W^L(t))$ is bounded in $L^1$-norm on compact time intervals uniformly with respect to $L$, and using the triangle inequality it suffices to prove that


	\begin{equation}\label{conv_to_zero}
		\int_0^{t} \E^L \left[\left| \hat{\Lcal}_{s} g(\W^L({s})) -  \frac{L-1}{L} \hat{\Lcal}_{\eta({s})}^L g(\W^L({s}))\right|\right] d{s} \ \to 0
	\end{equation}
	as $L\to \infty$. 
	Since $g\in C_b (\N )$ 
	and because of conditions \eqref{symmetry} and \eqref{kernel}, we find
	\begin{align*}
		&\left| \hat{\Lcal}_{s} g(\W^L{(s)}) -  \frac{L-1}{L} \hat{\Lcal}_{\eta{(s)}}^L g(\W^L{(s)}) \right|  \\
		 &\qquad \qquad \leq 4||g||_{\infty} \Bigg( \sum_{k \geq 1} c(k,\W^L{(s)})\left|F^L_{k}(\eta{(s)})-f_{k}{(s)}\right| +\frac{c (\W^L(s),W^L(s))}{L}\Bigg)\\
		& \qquad \qquad\leq 4C||g||_{\infty} \Bigg(   (W^L(s))^{\mu} \sum_{k \geq 1} k^{\nu}\left|F^L_{k}(\eta{(s)}){-}f_{k}{(s)}\right| \\& \qquad \qquad
		{+}(W^L(s))^{\nu} \sum_{k \geq 1} k^{\mu}\left|F^L_{k}(\eta{(s)}){-}f_{k}{(s)}\right| {+}\frac{2(\W^L (s))^{\mu+\nu}}{L}\Bigg)  {\ .}
	\end{align*}
	Notice that for all {$s\leq t,$} and $M>0$, since $\mu,\nu\leq 2$, we have
	\begin{align*}
		\E^L & \bigg[ (W^L(s))^{\mu} \sum_{k \geq 1} k^{\nu}\left|F^L_{k}(\eta{(s)}){-}f_{k}{(s)}\right| \bigg] \\ & \qquad \leq  
  \E^L \bigg[ (W^L(s))^{2\mu} \bigg]^{1/2} \E^L\bigg[ \bigg( \sum_{k \geq 1} k^{\nu}\left|F^L_{k}(\eta{(s)}){-}f_{k}{(s)}\right|\bigg)^2 \bigg]^{1/2} \\ & \qquad
        \leq  (\hat{m}^L_4(t))^{1/2} \E^L \bigg[\sum_{k \geq 1} k^{2\nu}\left|F^L_{k}(\eta{(s)}){+}f_{k}{(s)}\right| \sum_{l \geq 1} \left|F^L_{l}(\eta{(s)}){-}f_{l}{(s)}\right|  \bigg]^{1/2}\ .
	\end{align*}
Let  $Y_{\nu}^L(s):=\sum_{k \geq 1} k^{2\nu}\left|F^L_{k}(\eta{(s)}){+}f_{k}{(s)}\right|$. Then 
    \begin{align*}
          \E^L \bigg[Y_{\nu}^L(t) \sum_{k \geq 1} \left|F^L_{k}(\eta{(s)}){-}f_{k}{(s)}\right|  \bigg] \leq   M &\E^L \bigg[ \sum_{l \geq 1} \left|F^L_{l}(\eta{(s)}){-}f_{l}{(s)}\right|  \bigg]  \\ &+  2\rho\sup_{L\geq 2,  s \leq t}\E^L \big[ Y_{\nu}^L(s)\mathbbm{1} \{ Y_{\nu}^L(s)>M\} \big]
    \end{align*}
Defining $Y^L_{\mu}$ analogously, the fact that $\mu+\nu\leq 3$ and Proposition \ref{gwallN} imply that there exists $C_t>0$ independent of $L$ such that  
	\begin{align*}
		\int_0^{t} \E^L &\left[\left| \hat{\Lcal}_s g(\W^L(s)) -  \frac{L-1}{L} \hat{\Lcal}_{\eta(s)} g(\W^L(s))\right|\right] ds  \\&   \leq  4C||g||_{\infty}   \Bigg( C_t\int_0^{t} \Bigg( M \E^L \big[ \sum_{k \geq 1} \left|F^L_{k}(\eta{(s)}){-}f_{k}{(s)}\right|  \big] \\& \qquad \qquad  +  2\rho\sup_{L\geq 2,  s \leq t}\E^L \big[ Y_{\nu}^L(s)\mathbbm{1} \{ Y_{\nu}^L(s)>M\} \big] \Bigg)^{1/2}ds \Bigg) \\& \quad + 4C||g||_{\infty}   \Bigg( C_t\int_0^{t} \Bigg( M \E^L \big[ \sum_{k \geq 1} \left|F^L_{k}(\eta{(s)}){-}f_{k}{(s)}\right|  \big] \\& \qquad \qquad +  2\rho\sup_{L\geq 2,  s \leq t}\E^L \big[ Y_{\mu}^L(s)\mathbbm{1} \{ Y_{\mu}^L(s)>M\} \big] \Bigg)^{1/2}ds \Bigg) \\& \quad +8C||g||_{\infty}  \frac{C_tt}{L} \ .
	\end{align*}
	In the limit $L\to \infty$, based on Lemma  \ref{convsum}, we have for all $M>0$,
	\begin{multline*}
		  {\limsup \limits_{L\to \infty}}\int_0^t \E^L \left[\left| \hat{\Lcal}_s g(\W^L(s)) -  \frac{L-1}{L} \hat{\Lcal}^L_{\eta(s)} g(\W^L(s))\right|\right] ds   \\   \leq  4C\|g\|_{\infty} C_t\sqrt{2\rho}t \bigg( \sup_{L\geq 2,  s \leq t}\E^L \big[ Y_{\mu}^L(s)\mathbbm{1} \{ Y_{\mu}^L(s)>M\} \big] \big)^{1/2} \\ +\sup_{L\geq 2,  s \leq t}\E^L \big[ Y_{\nu}^L(s)\mathbbm{1} \{ Y_{\nu}^L(s)>M\} \big] \big)^{1/2}\bigg)  \ .
	\end{multline*}
    Notice that H\"older's inequality and Proposition \ref{gwall} give that the processes  $\{Y_{\mu}^L(s)\}_{L\geq 2,s\leq t}$ and $\{Y_{\nu}^L(s)\}_{L\geq 2,s\leq t}$ are uniform integrable. Therefore, taking $M\to \infty$  gives \eqref{conv_to_zero}.



\bibliographystyle{amsplain}
\bibliography{ref_new}


\end{document}